\documentclass[12pt]{amsart}
\usepackage[all]{xy}
\usepackage{graphicx}
\usepackage{tikz}

\usepackage{amsmath}
\usepackage{amssymb}
\usepackage{amsfonts}
\usepackage{mathrsfs}
\usepackage{mathtools}

\newcommand{\Cdb}{\ensuremath{\mathbb{C}}}
\newcommand{\Ddb}{\ensuremath{\mathbb{D}}}

\newcommand{\Ndb}{\ensuremath{\mathbb{N}}}

\newcommand{\Pdb}{\ensuremath{\mathbb{P}}}

\newcommand{\Tdb}{\ensuremath{\mathbb{T}}}

\newcommand{\Zdb}{\ensuremath{\mathbb{Z}}}

\newcommand{\F}{\mbox{${\mathcal F}$}}

\renewcommand{\H}{\mbox{${\mathcal H}$}}

\renewcommand{\P}{\mbox{${\mathcal P}$}}

\newcommand{\R}{\mbox{${\mathcal R}$}}

\newcommand{\X}{\mbox{${\mathcal X}$}}

\newcommand{\norm}[1]{\Vert#1\Vert}
\newcommand{\bignorm}[1]{\bigl\Vert#1\bigr\Vert}
\newcommand{\Bignorm}[1]{\Bigl\Vert#1\Bigr\Vert}
\newcommand{\biggnorm}[1]{\biggl\Vert#1\biggl\Vert}
\newcommand{\Biggnorm}[1]{\Biggl\Vert#1\Biggr\Vert}

\newtheorem{theorem}{Theorem}[section]
\newtheorem{lemma}[theorem]{Lemma}
\newtheorem{corollary}[theorem]{Corollary}
\newtheorem{proposition}[theorem]{Proposition}
\newtheorem{definition}[theorem]{Definition}
\theoremstyle{remark}
\newtheorem{remark}[theorem]{\bf Remark}
\theoremstyle{definition}

\numberwithin{equation}{section}

\begin{document}

\title[$H^\infty$-functional calculus and isometric dilations]
{Connecting $H^\infty$-functional calculus and isometric dilations
for commuting families of Ritt$_E$ operators}

\author[C. Le Merdy]{Christian Le Merdy}
\email{clemerdy@univ-fcomte.fr}
\address{Laboratoire de Math\'ematiques de Besan\c con, 
Universit\'e de Franche-Comt\'e, 16 route de Gray
25030 Besan\c{c}on Cedex, FRANCE}

\author[M. N. Reshmi ]{M.N. Reshmi}
\email{rmazhava@math.univ-toulouse.fr}
\address{Institute de Math\'ematiques Toulouse,
Universit\'e  Paul Sabatier, 118, route de Narbonne
F-31062 Toulouse Cedex 9,   FRANCE}

\date{\today}

\begin{abstract} 
Let $(T_1,\ldots,T_d)$
be a commuting $d$-tuple of Ritt$_E$ operators on 
some UMD Banach space
$X$. We show that $(T_1,\ldots,T_d)$ admits a 
bounded $H^\infty$-functional calculus if and only if
$T_k$ is an $R$-Ritt$_E$ operator
for every $k=1,\ldots,d$, and the $d$-tuple $(T_1,\ldots,T_d)$ admits 
an isometric dilation $(U_1,\ldots,U_d)$ on some UMD Banach space $Y$
such that 
$(U_1,\ldots,U_d)$ is polynomially bounded. In the case where $X$ further possesses property $(\alpha)$, we establish other characterizations of the $H^\infty$-functional calculus property for $(T_1,\ldots,T_d)$ in terms of isometric dilations.
\end{abstract}

\maketitle

\vskip 0.5cm
\noindent
{\it 2000 Mathematics Subject Classification :}  
47A60, 47A20, 47A13.

\smallskip
\noindent
{\it Key words:} 
Dilations, functional calculus.

\vskip 1cm

\section{Introduction and main statements}\label{1Intro} 
In Banach space operator theory,
$H^\infty$-functional calculus 
occupies a significant position due to its applications to the
harmonic analysis of operators or $C_0$-semigroups, 
ergodic theory, multiplier theory
and  evolution problems. Throughout its development, 
isometric or isomorphic  dilations have always played a major role. 
The interactions between $H^\infty$-functional calculus and dilations 
are visible from the earliest works concerning sectorial operators. 
They were definitely highlighted by a remarkable paper
of A. Frohlich and L. Weis \cite{FW}, in which it is shown 
that if $X$ is a UMD Banach space and if $A$ is an 
$R$-sectorial operator of type $<\frac{\pi}{2}$ on $X$, then 
$A$ has a bounded $H^\infty$-functional calculus if and only if
the semigroup $(e^{-tA})_{t\geq 0}$ admits a dilation into a 
bounded $C_0$-group
on the Bochner space $L^2([0,1];X)$. Since then, connections 
between $H^\infty$-functional calculus and 
dilations for sectorial operators 
have been refined in various situations, and then 
it was studied for Ritt operators
in \cite{AFL, ALM}, see also \cite[Chapter 10]{LM-Book}.

$H^\infty$-functional calculus for commuting $d$-tuples of sectorial 
operators was introduced in \cite{A} and then investigated from
various perspectives
in \cite{FMI}, 
\cite{LLL} and \cite{KaW1}. The more recent paper \cite{ArLM2} studies 
dilations in this context. Finally, the study of
$H^\infty$-functional calculus for
commuting $d$-tuples of Ritt operators 
was addressed in \cite{Ar, ArLM1, MR} and 
dilation results of various kinds
were established in these papers.

In this work, we are interested in the class of Ritt$_E$ operators, 
also known as polygonal type operators.
Let  $\Tdb=\{z\in\Cdb\, :\, \vert z\vert=1\}$
denote the unit circle and let $\Ddb=\{z\in\Cdb\, :\, \vert z\vert<1\}$
denote the open unit disc.
Let $\xi_1,\ldots,\xi_N$ be distinct elements of $\Tdb$, for some $N\geq 1$, and let 
$E=\{\xi_1,\ldots,\xi_N\}$. Let $X$ be a Banach space. 
We say that a bounded operator $T\colon X\to X$
is a Ritt$_E$ operator if $\sigma(T)\subset\overline{\Ddb}$ and there exists a constant $C\geq 0$ such that
$$
\norm{(z-T)^{-1}}\leq C\max\bigl\{\vert \xi_j -z\vert^{-1}\, 
:\, j=1,\ldots,N\bigr\},\qquad 
z\notin \overline{\Ddb}.
$$
This resolvent estimate implies that $\sigma(T)\subset \Ddb\cup E$. 
An operator is called a polygonal type 
operator if it is Ritt$_E$ for some $E$. 
These operators generalize Ritt operators, indeed $T$ is a Ritt operator
if and only if it is Ritt$_E$ for the singleton $E=\{1\}$.
Ritt$_E$ operators first appeared in \cite{FMI,dL}, where it was proved that
if a polygonal type operator on Hilbert space is polynomially bounded,
then it is similar to a contraction. 
Quite recently,  polygonal type operators on Banach space and their 
$H^\infty$-functional calculus were studied in
\cite{B,BLM}, and several important results concerning Ritt operators were extended to this broader setting.
Moreover, \cite[Theorem 3.9]{BLM} provides an extension of the aforementioned result of
\cite{FMI,dL} to the Banach space setting. This extension requires the use of $R$-boundedness and
the notion of $R$-Ritt$_E$ operator, for which we refer to Sub-section \ref{2RRE}  below.

In the present paper, we investigate $H^\infty$-functional calculus for a commuting $d$-tuple of
Ritt$_E$ operators on some Banach space $X$, 
and its connection with isometric dilations. 
This topic was previously considered in 
\cite{LMRMN} in the case where $X$ is a Hilbert space, 
and in \cite{MPR}. Right after Theorem
\ref{1Main2}, we provide a precise comparison between 
our current work and the results in
\cite{LMRMN} and \cite{MPR}.

Concerning dilations, we will use the following terminology.

\begin{definition}\label{1Dilate}
Let $d\geq 1$ be an integer,  let $(T_1,\ldots,T_d)$ be a 
commuting $d$-tuple of 
operators on some Banach space $X$ and let $Y$ be another Banach space.
We say that  $(T_1,\ldots,T_d)$ admits an isometric dilation on $Y$ if there exist
two bounded maps $J\colon X\to Y$ and $Q\colon Y\to X$,  and commuting 
isometric isomorphisms $U_1,\ldots,U_d$ on $Y$ such that 
$$
T_1^{n_1}\cdots T_d^{n_d} =  Q U_1^{n_1}\cdots U_d^{n_d} J,\qquad n_1,\ldots,n_d\geq 0.
$$
Furthermore, if the $d$-tuple $(U_1,\ldots,U_d)$ is polynomially bounded,  then we say that 
$(T_1,\ldots,T_d)$ admits a polynomially bounded isometric dilation on $Y$.
\end{definition}

We refer to Definition \ref{2PB} for 
polynomial boundedness and to Sub-section \ref{2HFC} for the definition
of bounded $H^\infty$-functional calculus for a commuting $d$-tuple of
Ritt$_E$ operators. 
The main results of this paper are the following two theorems,
which will be proved in Sections \ref{4M1} and $\ref{5M2}$, respectively.
The geometric properties involved in these statements, 
namely UMD and property $(\alpha)$,
and their relevance in this context are discussed in 
Sub-section \ref{2Banach} and in 
Section \ref{5M2}.

\begin{theorem}\label{1Main1}
Let $X$ be a UMD Banach space, let $E\subset \Tdb$ be a finite set
and let $(T_1,\ldots,T_d)$
be a commuting $d$-tuple of Ritt$_E$ operators on $X$. The following assertions are equivalent.
\begin{itemize}
\item [(i)]  The $d$-tuple $(T_1,\ldots,T_d)$ admits a bounded $H^\infty$-functional calculus.
\item [(ii)]  For every $k=1,\ldots,d$, $T_k$ is an $R$-Ritt$_E$ operator, and $(T_1,\ldots,T_d)$ admits 
a polynomially bounded isometric dilation on some UMD Banach space $Y$.
\item [(iii)]   For every $k=1,\ldots,d$, $T_k$ is an $R$-Ritt$_E$ operator, and $(T_1,\ldots,T_d)$ 
admits a polynomially bounded isometric dilation on some Banach space $Y$.
\end{itemize}
\end{theorem}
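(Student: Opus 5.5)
Since (ii)$\Rightarrow$(iii) is trivial, the plan is to prove (i)$\Rightarrow$(ii) and (iii)$\Rightarrow$(i).

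For (i)$\Rightarrow$(ii), I would first take $f$ depending on one variable only, so that each $T_k$ has a bounded $H^\infty$-functional calculus on a neighbourhood of the Stolz-type polygon $E$-domain. Since $X$ is UMD, it has finite cotype and Pisier's property $(\Delta)$ is available. The single-operator theory for Ritt$_E$ operators (the Banach space extension in \cite{BLM}, analogous to Kalton--Weis for sectorial operators) then upgrades the $H^\infty$-calculus on a smaller angle to $R$-boundedness. This shows that each $T_k$ is $R$-Ritt$_E$.

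Next, the joint calculus yields square function estimates in each variable separately. I would use the decomposition of the resolvent near each point $\xi_j$ of $E$, i.e. localized square functions of the form $\bigl\Vert (\sum_n n\vert T_k^n(T_k-\xi_j)x\vert^2)^{1/2}\bigr\Vert$ after a partition adapted to $E$. Combining these estimates in the $d$ variables gives multi-parameter square function estimates on the Rademacher space. The dilation is then built as in \cite{ArLM1,MR} for Ritt tuples and \cite{ALM} for one Ritt operator:
\begin{itemize}
\item $Y$ is a closed subspace or quotient of $\ell^2_{\Zdb^d}\otimes$ Rademacher-type space $\mathrm{Rad}(X)$ (or $L^2(\Omega;X)$ with an $\ell^2$ index), which is UMD;
\item the $U_k$ are the shifts in the $k$-th coordinate, twisted by multiplication by the points of $E$ to handle the multiple peripheral points;
\item $J$ and $Q$ are given by the lower and upper square function estimates via duality.
\end{itemize}
The $U_k$ are commuting isometries. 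They are polynomially bounded because the $d$-tuple of shifts on a UMD-valued $\ell^2$ (or $L^p$) space has a bounded polynomial calculus, by the vector-valued Fourier multiplier theory on $\Tdb^d$ and the UMD property (transference of the Riesz projection).

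For (iii)$\Rightarrow$(i), I would use the dilation to get, for every polynomial $P$ in $d$ variables, $\Vert P(T_1,\ldots,T_d)\Vert\le \Vert Q\Vert\Vert J\Vert C\Vert P\Vert_{\infty,\overline{\Ddb}^d}$. So the tuple is polynomially bounded, hence it has a bounded $H^\infty(\Ddb^d)$ calculus. The remaining task is to pass from the polydisc to the product of $E$-polygons $\Pdb_E^d$ (the joint calculus in the paper's sense). I expect this to be the main obstacle. The approach would be to extend \cite[Theorem 3.9]{BLM} to the multivariable case: $R$-Ritt$_E$ together with polynomial boundedness should imply the $H^\infty$ calculus on a polygon of any angle larger than the $R$-Ritt$_E$ angle.

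To carry out that extension, I would proceed in four steps:
\begin{itemize}
\item write $f\in H^\infty_0(\Pdb^d)$ via the multivariable Cauchy integral;
\item decompose dyadically near each $\xi_j$, using a Franks--McIntosh type decomposition in each variable, $f=\sum_{\mathbf{n}} a_{\mathbf{n}}\,\phi_{n_1}(z_1)\cdots\phi_{n_d}(z_d)$ with bounded coefficients;
\item bound $\sum_{\mathbf{n}} a_{\mathbf{n}}\phi_{n_1}(T_1)\cdots\phi_{n_d}(T_d)$ using the $R$-boundedness of $\{\phi_n(T_k)\}$ (from $R$-Ritt$_E$) to get upper square function estimates;
\item obtain the matching lower estimates for the dual tuple from the polynomial boundedness on $\Ddb^d$, which via the dilation gives square function bounds because the multi-shift's Rademacher averages are controlled.
\end{itemize}
Commutativity and the product structure of $R$-bounded families (Kalton--Weis multiparameter arguments) make the iteration across the $d$ coordinates work.
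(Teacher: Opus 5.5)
Your $R$-Ritt$_E$ step in (i)$\Rightarrow$(ii) and the overall shape of (iii)$\Rightarrow$(i) match the paper. The construction of the dilation in (i)$\Rightarrow$(ii), however, has a genuine gap at the decisive point, polynomial boundedness. Your claim that shifts on a UMD-valued $\ell^2$ (or $L^p$) space have a bounded polynomial calculus is false. UMD and transference only give a Marcinkiewicz-type bound (Lemma~\ref{5Marcin}), not a sup-norm bound. Already the shift $S$ on $\ell^p(\Zdb)$, $1<p<2$, is not polynomially bounded: the normalized Rudin--Shapiro polynomials $N^{-1/2}\sum_{n<N}\pm z^n$ have sup norm at most $\sqrt2$ on $\Ddb$, but send $\delta_0$ to a vector of norm $N^{1/p-1/2}$. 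By transference, if the shift on $\ell^2_{\Zdb}(W)$ were polynomially bounded, every isometric isomorphism of $W$ would be too, which fails for $W=\ell^p(\Zdb)$. Moreover, the square function obtained from the joint calculus lives in a Rademacher/Gaussian space, not in a Bochner $\ell^2_{\Zdb^d}(X)$ space. The missing idea is to let the shifts act on the index set of the Gaussian space $G(\Zdb^d;X)$, which is isomorphic to ${\rm Rad}(\Zdb^d;X)$ since $X$ has finite cotype. Every bounded operator on the Hilbert space $G_{\Zdb^d}$ extends to $G(\Zdb^d;X)$ with the same norm (Lemma~\ref{3Extension}). Hence $\varphi(U_1,\ldots,U_d)$ is the extension of $\varphi(V_1,\ldots,V_d)$ for commuting unitaries $V_k$, and its norm is at most $\norm{\varphi}_{\infty,\Ddb^d}$. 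The Rademacher analogue of this extension property is false, which is why the paper passes to Gaussians.

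Several other steps are also missing.

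(a) Building the multi-parameter square function by combining one-variable estimates cannot work on a general UMD space, even with $R$-Ritt$_E$ added. Together with the rest of your construction and (iii)$\Rightarrow$(i), it would show that individual calculi imply the joint one. Remark~\ref{5CEX} refutes this: take $Y$ UMD without $(\alpha)$, so that ${\rm Rad}^2(Y)$ is UMD. The joint estimate must come directly from the joint calculus, via a quadratic estimate for families of $d$-variable functions applied to $\prod_k z_k^{n_k}\prod_j(1-\overline{\xi_j}z_k)^{1/2}$ (Proposition~\ref{3Implication}).

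(b) Taking $J,Q$ ``from upper and lower estimates via duality'' does not give $T_1^{n_1}\cdots T_d^{n_d}=QU_1^{n_1}\cdots U_d^{n_d}J$; you need a reproducing formula. The paper uses three ingredients: the bounded Taylor coefficients $b_n$ of $1/\prod_j(1-\overline{\xi_j}z)$; the identity $\sum_n b_nT^n\prod_j(I_X-\overline{\xi_j}T)^2=\prod_j(I_X-\overline{\xi_j}T)$; and $Q=\tilde J^*$, where $\tilde J$ carries weights $b_{2m_k+c_k}$ over $\ell^2_{\{0,1\}^d}$ so that every exponent $2m+c$ is reached. No twisting of the shifts by points of $E$ is needed.

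(c) Such a $J$ annihilates each ${\rm Ker}(I_X-\overline{\xi_j}T_k)$, so the identity only holds on $\overline{\rm Ran}(B)$. You must use the mean ergodic decomposition of the reflexive space $X$ into jointly invariant pieces. On the pieces where $T_k=\xi_{j_k}I$, dilate by $U_k=\xi_{j_k}I_Y$, then glue by direct sums.

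Finally, in (iii)$\Rightarrow$(i), $R$-boundedness of $\{\phi_n(T_k)\}$ yields no square function estimate. The paper instead obtains $R$-boundedness of $\{\varphi(uT_1,\ldots,uT_d)\prod_k\theta_{i_k}(uT_k)\}$, with bound $\lesssim\norm{\varphi}_{\infty,E_s^d}$. It controls both Rademacher sums by polynomial boundedness alone, applied to random sums uniformly bounded on $\Ddb^d$.
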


\begin{theorem}\label{1Main2}
Let $X$ be a UMD Banach space with property $(\alpha)$, let $E\subset \Tdb$ be a finite set
and let $(T_1,\ldots,T_d)$
be a commuting $d$-tuple of Ritt$_E$ operators on $X$. 
The following assertions are equivalent.
\begin{itemize}
\item [(i)]  For every $k=1,\ldots,d$, $T_k$ admits a bounded $H^\infty$-functional calculus.
\item [(ii)]  The $d$-tuple $(T_1,\ldots,T_d)$ admits a bounded $H^\infty$-functional calculus.
\item [(iii)]  For every $k=1,\ldots,d$, $T_k$ is an $R$-Ritt$_E$ operator, and $(T_1,\ldots,T_d)$
admits a polynomially bounded isometric dilation on some UMD Banach space $Y$
with property $(\alpha)$.
\item [(iv)]  For every $k=1,\ldots,d$, $T_k$ is an $R$-Ritt$_E$ operator, and $(T_1,\ldots,T_d)$ 
admits an isometric dilation on some UMD Banach space $Y$.
\end{itemize}
\end{theorem}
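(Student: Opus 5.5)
We will establish the cycle (ii)$\Rightarrow$(i)$\Rightarrow$(iii)$\Rightarrow$(iv)$\Rightarrow$(ii), using Theorem \ref{1Main1} for part of the work.

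\emph{(ii)$\Rightarrow$(i).} This is immediate from the definition of the $H^\infty$-functional calculus for $d$-tuples. A function $f\in H^\infty$ of one variable can be regarded as a function of $(z_1,\ldots,z_d)$ that depends only on $z_k$. The $d$-variable calculus applied to it gives $f(T_k)$, with the same norm bound.

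\emph{(i)$\Rightarrow$(iii).} First, each $T_k$ has a bounded $H^\infty$-calculus on the UMD space $X$. In particular $X$ has finite cotype, so the known results for Ritt$_E$ operators (\cite{BLM}) show that $T_k$ is $R$-Ritt$_E$. We then use property $(\alpha)$ and the Kalton--Weis type principle that, on spaces with $(\alpha)$, individual bounded $H^\infty$-calculi of commuting operators combine into a joint calculus. For commuting sectorial operators this is due to \cite{LLL} and \cite{KaW1}; the needed fact is that the $R$-bounded families $\{f(T_k)\}$ are jointly $R$-bounded under $(\alpha)$. Transferring this to Ritt$_E$ operators gives (i)$\Rightarrow$(ii), and Theorem \ref{1Main1} then yields a polynomially bounded isometric dilation on some $Y$. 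To obtain (iii) we must check that $Y$ is UMD \emph{with property $(\alpha)$}. For this we look inside the proof of Theorem \ref{1Main1}. There $Y$ should be built as a space of the form $\mathrm{Rad}(\Zdb^d;X)$, or a square-function space $\ell^2$-valued over $X$ (a closed subspace of $L^2(\Omega;X)$ or similar), with $U_k$ the shifts. Such spaces inherit both UMD and $(\alpha)$ from $X$, because $L^p(\Omega;X)$ and its closed subspaces keep these properties.

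\emph{(iii)$\Rightarrow$(iv).} This is trivial: drop polynomial boundedness and $(\alpha)$.

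\emph{(iv)$\Rightarrow$(ii).} This is the main step. Let $Q,J,U_k$ be as in the dilation. On a UMD space, an isometric isomorphism $U_k$ with $\sigma(U_k)\subset\Tdb$ and bounded powers over $\Zdb$ admits a transference bound: by Berkson--Gillespie/Coifman--Weiss, $\norm{p(U_k)}\lesssim$ the Fourier multiplier norm of $p$ on $L^p(\Tdb;Y)$. Products of such operators give a joint estimate by multi-parameter Marcinkiewicz multipliers on $L^p(\Tdb^d;Y)$. Since we cannot assume $(\alpha)$ on $Y$, we aim only for one-variable estimates:
\[
\norm{\phi(T_k)}=\norm{Q\phi(U_k)J}\lesssim \norm{\phi}_{\mathcal{M}},
\]
where $\mathcal{M}$ is the class of Marcinkiewicz/Mikhlin-type multipliers on $\Tdb$ adapted to the points of $E$. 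For $T_k$ that is $R$-Ritt$_E$, we then reduce from $H^\infty$ to such multipliers via square functions. This mirrors \cite{ALM} for Ritt operators and \cite{BLM} for Ritt$_E$: $R$-Ritt$_E$ together with a bound of Marcinkiewicz type (boundedness of the discrete Hilbert transform associated with $U_k$ localized near each $\xi_j$) yields square function estimates for $T_k$ and $T_k^*$. Those estimates in turn yield a bounded $H^\infty$-calculus of $T_k$ on a Stolz-type polygon $E$-domain. This gives (i), and the joint calculus (ii) then follows by the $(\alpha)$ argument used above.

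The obstacle is this last step. We have to show that the UMD transference estimates for the isometries, combined with $R$-Ritt$_E$, really do produce the square function estimates. The technical point I would try first is to use the partition-of-unity localization near each $\xi_j$ from \cite{BLM} to reduce to the Ritt case treated in \cite{ALM}.
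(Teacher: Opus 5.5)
\emph{Verdict: genuine gap.} Your steps (ii)$\Rightarrow$(i), (iii)$\Rightarrow$(iv) and (i)$\Rightarrow$(iii) are essentially the paper's. The joint calculus comes from Proposition \ref{5Joint}. The dilation space built in the proof of Theorem \ref{1Main1} is $\ell^2_{\mathcal C}(G(\Zdb^d;X))$, or in the general case a finite direct sum of such spaces built over invariant subspaces of $X$. This is a closed subspace of an $L^2$-Bochner space over $X$, so it inherits UMD and $(\alpha)$. One correction: $R$-Ritt$_E$ does not follow from finite cotype. It follows from property $(\Delta)$, which UMD spaces have by Kalton--Weis (Lemma \ref{2Delta}). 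Also, the transference bound is the norm of $\varphi({\mathfrak s}_Y)$ on $\ell^2_{\Zdb}(Y)$. That is a multiplier on $\Zdb$ with symbol $\varphi_{\vert\Tdb}$, not a multiplier acting on $L^p(\Tdb;Y)$.

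The gap is (iv)$\Rightarrow$(i), the heart of the theorem, which you leave as an acknowledged obstacle. You plan to get square function estimates for $T_k$ and $T_k^*$ from a Marcinkiewicz bound ``adapted to the points of $E$'' plus $R$-Ritt$_E$. You state neither that multi-point multiplier theorem nor how it would produce square functions, so this implication is not proved. The missing idea is that neither square functions nor a new multiplier theorem is needed:
\begin{itemize}
\item Rotate. $\overline{\xi_j}T_k$ is dilated by the isometry $\overline{\xi_j}U_k$, so transference gives $\norm{\varphi(\overline{\xi_j}T_k)}\lesssim\norm{\varphi}_Y$ for $\varphi\in\H$. The usual one-point Marcinkiewicz theorem (Lemma \ref{5Marcin}), applied to $\varphi(z)=f(1-z)$, then yields as in Proposition \ref{5Arhancet} a bounded $H^\infty(\Sigma_\omega)$-calculus for $A_j=I_X-\overline{\xi_j}T_k$, for every $\omega\in(\frac{\pi}{2},\pi)$.
\item Reduce the angle. $R$-Ritt$_E$ gives $R$-boundedness of $\{zR(z,A_j):{\rm Re}(z)<0\}$. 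The Kalton--Weis angle reduction \cite[Proposition 5.1]{KaW1} then lowers the angle of the calculus to some $\omega_j<\frac{\pi}{2}$.
\item Localise. \cite[Theorem 4.3]{BLM} says a Ritt$_E$ operator $T$ has a bounded $H^\infty$-calculus once every $I_X-\overline{\xi_j}T$ has a bounded $H^\infty(\Sigma_{\omega_j})$-calculus with $\omega_j<\frac{\pi}{2}$. This is exactly the localisation at the points of $E$ you were looking for.
\end{itemize}
After that, (i)$\Rightarrow$(ii) follows from property $(\alpha)$, as you say.
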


In \cite{LMRMN}, we showed that if $X$ is a Hilbert space and if $T$ is 
a Ritt$_E$ operator on $X$ that admits a bounded $H^\infty$-functional 
calculus, then it admits an isometric dilation on a Hilbert space $Y$. 
Then, we used the specific form of this dilation to deduce that 
if a commuting $d$-tuple $(T_1,\ldots,T_d)$ of Ritt$_E$ 
operators on $X$ 
is such that each $T_k$ possesses a bounded $H^\infty$-functional
calculus, 
then $(T_1,\ldots,T_d)$ admits an isometric dilation on a 
Hilbert space $Y$. This approach was extended in \cite{MPR} to the 
case where $X$ is simply a reflexive Banach space such that $X$ 
and $X^*$ have finite cotype. Precisely, it is proven therein 
that if $(T_1,\ldots,T_d)$ is a commuting 
$d$-tuple of Ritt$_E$ operators 
on such a space $X$, and if each $T_k$ possesses a bounded 
$H^\infty$-functional calculus, then for any $1<p<\infty$,
$(T_1,\ldots,T_d)$ admits an isometric dilation on a Bochner
space $Y=L^p(\X;X)$.

This result and this approach are interesting in themselves,
but for commuting $d$-tuple $(T_1,\ldots,T_d)$ of Ritt$_E$ 
operators, they cannot lead to any characterization of the bounded 
$H^\infty$-functional calculus by a dilation property. Indeed,
it only takes into account the bounded $H^\infty$-functional calculus 
property for each individual $T_k$, which is generally a weaker property
than the bounded $H^\infty$-functional calculus property of the family 
$(T_1,\ldots, T_d)$, see Remark \ref{5CEX}. The main novelty of the work 
presented here is to consider the bounded $H^\infty$-functional calculus of a family
$(T_1,\ldots, T_d)$ (not just the bounded $H^\infty$-functional calculus of each of
its elements) and to introduce square functions associated with this 
property, see Section \ref{3SFE}. In the proof of Theorem \ref{1Main1},
see
Section \ref{4M1}, these square functions are precisely what allow for 
the construction of polynomially bounded isometric dilations, a property
that proves to be essential in establishing a characterization of the 
bounded $H^\infty$-functional calculus via dilations.

\section{Ritt$_E$-operators, functional calculus and Banach space geomerty}\label{2}

\subsection{Generalities}\label{Gen}
Throughout we let $X$ be a complex Banach space, we let $B(X)$ be the Banach algebra
of all bounded linear operators on $X$ and we let $I_X$ denote the identity operator 
on $X$. Sometimes, $I_X$ will be denoted by 1 for simplicity.
For any $T\in B(X)$, we let $\sigma(T)$ denote the spectrum of $T$. Then
for any $z\in\Cdb\setminus\sigma(T)$, we denote $R(z,T)=(z-T)^{-1}$ as the resolvent operator of $T$ at $z$.

For any $z_0\in \Cdb$ and any positive number $r>0$, we let $D(z_0,r)\subset\Cdb$ denote
the open disc centered at $z_0$ with radius $r$.

Let $d\geq 1$ be an integer. 
For any $\Omega\subset\Cdb^d$ and for any bounded function
$\varphi\colon\Omega\to \Cdb$, we set
$$
\norm{\varphi}_{\infty,\Omega} = \sup\bigl\{\vert \varphi(z)\vert\, :\, z\in\Omega\bigr\}.
$$
If further $\Omega$ is open, we let $H^\infty(\Omega)$ denote the space of all
bounded analytic functions
$\varphi\colon\Omega\to \Cdb$, equipped with $\norm{\,\cdotp}_{\infty,\Omega}$. This is a Banach algebra.

We let $\P_d$ denote the algebra of all complex polynomials in $d$ variables.

\begin{definition}\label{2PB} Let 
$(T_1,\ldots,T_d)$ be a commuting $d$-tuple of operators on $X$.
We say that $(T_1,\ldots,T_d)$ is polynomially bounded if there exists a constant $K\geq 1$ such that
$$
\norm{\varphi(T_1,\ldots,T_d)}\leq K\norm{\varphi}_{\infty, {\mathbb D}^d},\qquad \varphi\in\P_d.
$$
\end{definition}

\subsection{Ritt$_E$ operators and their functional calculus}\label{2HFC}
From now on, we fix a finite set  $E=\{\xi_1,\ldots,\xi_N\}\subset \Tdb$ of 
arbitrary size $N\geq 1$ (we assume that 
all the $\xi_j$ are distinct). Following \cite{BLM}, we say that 
an operator $T\in B(X)$ is a Ritt$_E$ operator if  $\sigma(T)\subset\overline{\Ddb}$ 
and there exists a constant $C\geq 0$ such that
$$
\norm{R(z,T)}\leq C\max\biggl\{\frac{1}{\vert \xi_j -z\vert}\,\, :\, j=1,\ldots,N\biggr\},\qquad
z\in \Cdb\setminus\overline{\Ddb}.
$$
It is easy to check (see \cite[Remark 2.3]{BLM}) that $T$ is Ritt$_E$ 
if and only if $\sigma(T)\subset\overline{\Ddb}$ and there exists a constant $C\geq 0$ such that
\begin{equation}\label{2Set-T}
\norm{R(z,T)}\leq \,\frac{C}{\prod_{j=1}^N\vert \xi_j -z \vert}\,,\qquad z\in D(0,2)\setminus\overline{\Ddb}.
\end{equation}
It was proved in \cite[Theorem 2.10]{BLM} that
$T$ is Ritt$_E$  if and only if $T$ is power bounded and  
there exists a constant $C_1\geq 0$ such that
\begin{equation}\label{2C1}
\Bignorm{T^{n-1}\prod_{j=1}^N(\xi_j-T)}\,\leq\,\frac{C_1}{n}\,,\qquad n\geq 1.
\end{equation}

Let $\rho\in(0,1)$. Following \cite[Definition 2.6]{BLM}, we let $E_\rho$ denote the interior 
of the convex hull of $E$ and the disc $D(0,\rho)$.
We say that a Ritt$_E$ operator $T$
is of type $\rho$  if $\sigma(T)\subset\overline{E_\rho}$
%$\rho$ is large enough in the sense of \cite[Remark 2.7]{BLM},
and for all $s\in(\rho,1)$, $E_s\cap\overline{E_\rho}=E$ and
there exists a constant $C\geq 0$ such that
\begin{equation}\label{2Strength}
\norm{R(z,T)}\leq \,\frac{C}{\prod_{j=1}^N\vert\xi_j-z\vert}\,,
\qquad z\in D(0,2)\setminus\overline{E_s}.
\end{equation} 
The existence of such a $\rho\in(0,1)$ is ensured by \cite[Lemma 2.8]{BLM}.

Note that in the case when $E=\{1\}$, $E_\rho$ coincides with the Stolz domain
$B_\rho$  considered in the study of Ritt operators
(see e.g. \cite[Figure 1]{LM-2014} or \cite[Paragraph 2.2]{LM-Book}).

We will need sectorial operators (in the bounded case), for which we refer either
to \cite{Ha} or \cite[Chapter 10]{HVVW2}. We recall 
from \cite[Lemma 2.4]{BLM} that if $T\in B(X)$
is Ritt$_E$, then for every $j=1,\ldots,N$, the operator
$I_X-\overline{\xi_j}T$ is sectorial. Therefore, we may consider the fractional powers 
$(I_X-\overline{\xi_j}T)^a$ for all $a>0$, see e.g. \cite[Chapter 3]{Ha}.
This will be used in (\ref{3Ts}) below.

For the remainder of this sub-section, we fix a commuting $d$-tuple $(T_1,\ldots,T_d)$
of Ritt$_E$ operators on $X$. 
For every $k=1,\ldots, d$, let $r_k\in (0,1)$ 
such that $T_k$ is of type $< r_k$. Then for any 
$u\in(0,1)$, we have
$\sigma(uT_1)\times\cdots\times\sigma(uT_d)\subset E_{r_1}\times\cdots\times E_{r_d}$. Hence
for any $\varphi$
in $H^\infty(E_{r_1}\times\cdots\times E_{r_d})$, we may define
$$
\varphi\bigl(uT_1,\ldots,uT_d)\,\in B(X),
$$
using the multi-variable Dunford-Riesz functional calculus.
More explicitly,
for any $v\in (u,1)$,
\begin{equation}\label{2Taylor}
\varphi(uT_1,\ldots,uT_d) = \Bigl(\frac{1}{2\pi i}\Bigr)^d
\int_{\prod_{k=1}^d \partial [vE_{r_k}]}
\varphi(z_1,\ldots,z_d) \prod_{k=1}^d R(z_k,uT_k)\,\prod_{k=1}^d dz_k,
\end{equation}
where  for every $k=1,\ldots, d$, the notation
$\partial [vE_{r_k}]$ stands for the boundary of $vE_{r_k}$ oriented counterclockwise.

\begin{definition}\label{2H} 

\

\begin{itemize}
\item [(1)] 
We say that $(T_1,\ldots,T_d)$ admits a bounded
$H^\infty(E_{r_1}\times\cdots\times E_{r_d})$-functional calculus if  
there exists a constant $K\geq 0$
such that
$$
\norm{\varphi(uT_1,\ldots,uT_d)}\leq K\norm{\varphi}_{\infty, 
E_{r_1}\times\cdots\times E_{r_d}},
$$
for all $\varphi\in H^\infty(E_{r_1}\times\cdots\times E_{r_d})$ and all $u\in (0,1).$

\smallskip
\item [(2)]
We say that $(T_1,\ldots,T_d)$ admits a bounded $H^\infty$-functional calculus if  
it admits a bounded
$H^\infty(E_{r_1}\times\cdots\times E_{r_d})$-functional calculus for some
$(r_1, \ldots,r_d)\in (0,1)^d$.
\end{itemize}
\end{definition}

The above definition, based on the approximation of 
$(T_1,\ldots,T_d)$ by
$(uT_1,\ldots,uT_d)$, looks different from 
classical definitions of bounded $H^\infty$-functional calculus in other contexts
(see in particular \cite{ArLM1, BLM, LM-2014}). 
However, we will see below how this definition 
aligns with those of these previous works.

\begin{lemma}\label{2Runge} 
The following assertions are equivalent.
\begin{itemize}
\item [(i)] The $d$-tuple $(T_1,\ldots,T_d)$ admits a bounded
$H^\infty(E_{r_1}\times\cdots\times E_{r_d})$ functional calculus.
\item [(ii)] There exists a constant $K\geq 1$ such that 
$$
\norm{\varphi(T_1,\ldots,T_d)}\leq K
\norm{\varphi}_{\infty, E_{r_1}\times\cdots\times E_{r_d}},\qquad \varphi\in\P_d.
$$
\end{itemize}
\end{lemma}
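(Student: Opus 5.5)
For (i)$\Rightarrow$(ii), I will fix a polynomial $\varphi\in\P_d$ and use that $\varphi$ is bounded and analytic on a neighbourhood of $\overline{\Ddb}^d$. Its restriction to $E_{r_1}\times\cdots\times E_{r_d}$ therefore lies in $H^\infty(E_{r_1}\times\cdots\times E_{r_d})$. Since $\varphi$ is a polynomial, the Dunford--Riesz integral (\ref{2Taylor}) gives exactly the polynomial $\varphi(uT_1,\ldots,uT_d)$. This is the standard consistency of the holomorphic calculus with the polynomial calculus, checked variable by variable via Cauchy's formula for $z_k^{n}R(z_k,uT_k)$. Hence (i) yields
$$
\norm{\varphi(uT_1,\ldots,uT_d)}\leq K\norm{\varphi}_{\infty,E_{r_1}\times\cdots\times E_{r_d}},\qquad u\in(0,1).
$$
Letting $u\to1$, the operators $\varphi(uT_1,\ldots,uT_d)$ converge in norm to $\varphi(T_1,\ldots,T_d)$, because they are finite sums of monomials in $uT_k$. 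This gives (ii).

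For (ii)$\Rightarrow$(i), I fix $\varphi\in H^\infty(E_{r_1}\times\cdots\times E_{r_d})$ and $u\in(0,1)$, and choose $v\in(u,1)$. The plan is to approximate $\varphi$ by polynomials uniformly on a neighbourhood of the compact set $\overline{vE_{r_1}}\times\cdots\times\overline{vE_{r_d}}$ while controlling sup norms. I will define $\varphi_s(z)=\varphi(sz_1,\ldots,sz_d)$ for $s\in(v,1)$ close to $1$. Each $E_{r_k}$ is convex and contains $0$, so $sE_{r_k}\subset E_{r_k}$. Consequently $\varphi_s$ is analytic on a neighbourhood $\Omega_s$ of the closure of $\prod_k E_{r_k}$, because $s\overline{E_{r_k}}\subset E_{r_k}$ when $0\in E_{r_k}$ and $E_{r_k}$ is convex and open. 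Moreover $\norm{\varphi_s}_{\infty,\Omega_s}\le\norm{\varphi}_\infty$.

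The closure $\prod_k\overline{E_{r_k}}$ is a product of compact convex sets, hence polynomially convex. Oka--Weil (or, more simply, Taylor expansion on a polydisc-like neighbourhood, or the one-variable Runge theorem applied in each variable) will give polynomials $p_n$ with $p_n\to\varphi_s$ uniformly on a compact neighbourhood $L$ of $\prod_k\overline{E_{r_k}}$. Then $\norm{p_n}_{\infty,E_{r_1}\times\cdots\times E_{r_d}}\to\norm{\varphi_s}_{\infty,\prod E_{r_k}}\le\norm{\varphi}_\infty$.

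Next I apply (ii) to the polynomials $q_n(z)=p_n(uz)$ in place of $p_n$. Since $uE_{r_k}\subset E_{r_k}$, we have $\norm{q_n}_{\infty,\prod E_{r_k}}\le\norm{p_n}_{\infty,\prod E_{r_k}}$. This yields
$$
\norm{p_n(uT_1,\ldots,uT_d)}\le K\norm{p_n}_{\infty,\prod E_{r_k}}.
$$
By the integral formula (\ref{2Taylor}), with contour in $L$ where the convergence is uniform, $p_n(uT_1,\ldots,uT_d)\to\varphi_s(uT_1,\ldots,uT_d)=\varphi(suT_1,\ldots,suT_d)$ in norm. Hence $\norm{\varphi(suT_1,\ldots,suT_d)}\le K\norm{\varphi}_\infty$ for all $s$ near $1$. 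Finally, as $s\to1$, $\varphi(sz)\to\varphi(z)$ uniformly on the compact contour $\prod_k\partial[vE_{r_k}]$, which lies inside $\prod_k E_{r_k}$. The integral (\ref{2Taylor}) then gives $\varphi(suT_1,\ldots,suT_d)\to\varphi(uT_1,\ldots,uT_d)$ in norm, and we obtain (i).

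The main obstacle is the polynomial approximation step in several variables together with the accompanying sup-norm control. The dilation trick $z\mapsto sz$, which relies on convexity and $0\in E_{r_k}$, is what makes the sup norm harmless. For the approximation I will first try to avoid Oka--Weil. I would use the one-variable Runge theorem on the convex set $\overline{E_{r_k}}$ iteratively, or expand the Cauchy kernel $\prod_k(w_k-z_k)^{-1}$ on the product contour: for $w_k$ outside a neighbourhood of a convex compact, each $(w_k-z_k)^{-1}$ is uniformly approximable by polynomials in $z_k$ uniformly in $w_k$ on the contour. Integrating then gives $\varphi_s$ as a uniform limit of polynomials on a neighbourhood of $\prod_k\overline{E_{r_k}}$.
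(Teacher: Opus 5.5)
Your argument is correct and is essentially the paper's proof: a Runge-type polynomial approximation on a product of convex compact sets, then (ii) applied to the rescaled polynomials $z\mapsto p_n(uz)$, then passage to the limit through the integral formula (\ref{2Taylor}). The only difference is that your auxiliary dilation $\varphi_s$ and the extra limit $s\to 1$ are unnecessary: (ii) applied to $z\mapsto p_n(uz)$ only involves $\sup\vert p_n\vert$ over $uE_{r_1}\times\cdots\times uE_{r_d}\subset vE_{r_1}\times\cdots\times vE_{r_d}$, so the paper approximates $\varphi$ itself uniformly on $\overline{vE_{r_1}}\times\cdots\times\overline{vE_{r_d}}$, a compact set on a neighbourhood of which $\varphi$ is already holomorphic.
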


\begin{proof}
The implication ``$(i)\Rightarrow(ii)$" is clear, since for any polynomial $\varphi\in\P_d$,
we have a norm convergence 
$\varphi(uT_1,\ldots,uT_d)\to \varphi(T_1,\ldots,T_d)$, 
as $u\to 1$.

The converse implication ``$(ii)\Rightarrow(i)$" is a variant of the proof of \cite[Proposition 2.5]{ArLM1}, so we will be brief.
Given any  $\varphi\in H^\infty(E_{r_1}\times\cdots\times E_{r_d})$ and  any
$u\in(0,1)$, fix some $v\in(u,1)$ and apply Runge's theorem as stated in \cite[Lemma 2.4]{ArLM1}. We obtain
a sequence $(\varphi_m)_{m\geq 1}$ of $\P_d$ which converges uniformly to $\varphi$ on the closure of
$v E_{r_1}\times\cdots \times v E_{r_d}$. Applying (\ref{2Taylor}) to $\varphi_m$
and to $\varphi$, we deduce that 
$\varphi_m(uT_1,\ldots,uT_d)\to\varphi(uT_1,\ldots,uT_d)$, as $m\to\infty$. 
Moreover, the assumption (ii) applied to the polynomial  $(z_1,\ldots,z_d)\mapsto \varphi_m(uz_1,\ldots,uz_d)$
implies that 
$$
\norm{\varphi_m(uT_1,\ldots,uT_d)}\leq K\norm{\varphi_m}_{\infty,
v E_{r_1}\times\cdots \times v E_{r_d}},\qquad m\geq 1.
$$ 
Letting $m\to\infty$, this yields 
$\norm{\varphi(uT_1,\ldots,uT_d)}\leq K
\norm{\varphi}_{\infty, E_{r_1}\times\cdots\times E_{r_d}}$, which proves (i).
\end{proof}

The above lemma implies that a commuting $d$-tuple of Ritt$_E$ operators
with a bounded $H^\infty$-functional calculus is necessarily polynomially bounded.
See Remark \ref{4Converse} for a partial converse.

It follows from \cite[Proposition 2.5]{ArLM1} that 
in the context of Ritt operators (that is, when 
$E=\{1\}$), Definition \ref{2H}, (1) is equivalent to \cite[Definition 2.3]{ArLM1}.
Likewise, it follows from \cite[Proposition 3.4]{BLM} that if $d=1$, 
Definition \ref{2H}, (1) is equivalent to \cite[Definition 3.3]{BLM}.
Lemma \ref{2Approx} below provides another key result
linking our definition of a bounded 
$H^\infty$-functional calculus to \cite{ArLM1, BLM}.

For any $r_1,\ldots,r_d\in(0,1)$, we let
$H^\infty_0(E_{r_1}\times\cdots\times E_{r_d})$
be the space of all functions $\varphi$ in $H^\infty(E_{r_1}\times\cdots\times E_{r_d})$ for 
which there exist two positive real numbers  $c,\gamma>0$ such that
\begin{equation}\label{2H0}
\vert\varphi(z_1,\ldots,z_d)\vert\leq c\biggl(\prod_{k=1}^d\prod_{j=1}^N\vert \xi_j-z_k\vert\biggr)^\gamma,\qquad
z_1\in E_{r_1},\ldots, z_d\in E_{r_d}.
\end{equation}
This is an ideal of $H^\infty(E_{r_1}\times\cdots\times E_{r_d})$.

\begin{lemma}\label{2Approx}
Assume that for every $k=1,\ldots,d$, $T_k$ is of type $\rho_k$ for some $\rho_k\in(0,r_k)$
and let $s_k\in(\rho_k,r_k)$. Then, for all $\varphi\in H^\infty_0(E_{r_1}\times\cdots\times E_{r_d})$,
\begin{equation}\label{2App}
\varphi(uT_1,\ldots,uT_d)\longrightarrow
 \Bigl(\frac{1}{2\pi i}\Bigr)^d
\int_{\prod_{k=1}^d \partial E_{s_k}} 
\varphi(z_1,\ldots,z_d) \prod_{k=1}^d R(z_k,T_k)\,\prod_{k=1}^d dz_k
\end{equation}
as $u\to 1$, the integral in the right-hand side being absolutely convergent in $B(X)$.
\end{lemma}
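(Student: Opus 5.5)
The plan is to deform the contours in (\ref{2Taylor}) so that the integral for $\varphi(uT_1,\ldots,uT_d)$ runs over $\prod_k \partial E_{s_k}$, and then let $u\to 1$ by dominated convergence.

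\emph{Step 1: rewriting $\varphi(uT_1,\ldots,uT_d)$.} Fix $u\in(0,1)$ close to $1$. Since $T_k$ is of type $\rho_k$, we have $\sigma(uT_k)\subset u\overline{E_{\rho_k}}$. Because $u<1$, this is a compact subset of $E_{s_k}$. The function $\varphi$ is holomorphic on a neighbourhood of $\prod_k \overline{E_{s_k}}$ minus the points of $E$, while $\partial E_{s_k}$ passes through the points $\xi_j$. So the first task is to justify
$$
\varphi(uT_1,\ldots,uT_d)=\Bigl(\frac{1}{2\pi i}\Bigr)^d\int_{\prod_k\partial E_{s_k}}\varphi(z_1,\ldots,z_d)\prod_k R(z_k,uT_k)\,\prod_k dz_k .
$$
I would obtain this by applying (\ref{2Taylor}) with contours $\partial[vE_{s_k}]$ for $v\in(u,1)$ close to $1$. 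The integrand is analytic in each $z_k$ between $\partial[vE_{s_k}]$ and $\partial E_{s_k}$, so Cauchy's theorem, used one variable at a time, lets us let $v\to1$. The $H^\infty_0$ bound (\ref{2H0}) controls the pieces near $E$, where the two contours converge to each other. For fixed $u<1$ the resolvents $R(z_k,uT_k)$ are uniformly bounded near $\partial E_{s_k}$, since $\sigma(uT_k)$ stays at positive distance from $\partial E_{s_k}$. Hence the limit is harmless.

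\emph{Step 2: a uniform resolvent bound.} I would show that there exists $C$ such that for all $u\in[u_0,1]$ and $z\in\partial E_{s_k}\setminus E$,
$$
\norm{R(z,uT_k)}\leq \frac{C}{\prod_j\vert\xi_j-z\vert}.
$$
We have $R(z,uT_k)=u^{-1}R(z/u,T_k)$. Since $\partial E_{s_k}$ near $\xi_j$ consists of two segments meeting $\Tdb$ transversally, the point $z/u$ lies in $D(0,2)\setminus \overline{E_{s'}}$ for some $s'\in(\rho_k,s_k)$. Also $\vert\xi_j - z/u\vert\gtrsim \vert \xi_j-z\vert$ near $\xi_j$, uniformly in $u$. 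Then (\ref{2Strength}) applied with $s'$ gives the bound. Away from $E$ the bound is immediate by compactness.

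\emph{Step 3: passing to the limit.} By (\ref{2H0}) and Step 2, the integrand is dominated by
$$
c\prod_k\Bigl(\prod_j\vert\xi_j-z_k\vert\Bigr)^{\gamma-1}.
$$
This is integrable on $\prod_k\partial E_{s_k}$, since along each segment $\vert\xi_j-z_k\vert$ is comparable to arclength and $\gamma-1>-1$. The same estimate with $u=1$ gives the absolute convergence of the right-hand side of (\ref{2App}). For each fixed $z$ off $E$, $R(z_k,uT_k)\to R(z_k,T_k)$ in norm as $u\to1$. Dominated convergence in $B(X)$ then yields (\ref{2App}).

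The main obstacle is Step 2. It requires careful geometry near each $\xi_j$: one must ensure that $z/u$ stays outside a slightly smaller $E_{s'}$, using $E_{s'}\cap\overline{E_{\rho_k}}=E$ and the angle at $\xi_j$, so that (\ref{2Strength}) applies uniformly in $u$.
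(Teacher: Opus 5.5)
Your proposal is correct and follows the same route as the paper. You move the contour to $\prod_k \partial E_{s_k}$ by Cauchy's theorem, bound $\norm{R(z,uT_k)}$ by $C\prod_j\vert\xi_j-z\vert^{-1}$ uniformly in $u$, dominate the integrand using (\ref{2H0}), and conclude by dominated convergence. The only difference is that the paper takes the uniform resolvent estimate of your Step 2 directly from \cite[Lemma 3.1]{B}, whereas you derive it by hand from $R(z,uT_k)=u^{-1}R(z/u,T_k)$ and the geometry of $\partial E_{s_k}$ near each $\xi_j$.
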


\begin{proof}
For any $k=1,\ldots,d$, we apply \cite[Lemma 3.1]{B} to $T_k$. 
We obtain that there exists a constant 
$C \geq 0$ such that
\begin{equation}\label{2Uniform}
\norm{R(z,uT_k)}\leq \,\frac{C}{\prod_{j=1}^N\vert\xi_j-z\vert}\,,
\qquad z\in D(0,2)\setminus\overline{E_{s_k}},
\end{equation} 
for all $u\in (0,1)$. That is, the operators $uT_k$ satisfy (\ref{2Strength}) with $s=s_k$, uniformly in $u$.

Assuming (\ref{2H0}) and applying (\ref{2Uniform}), it is easy to check that 
the integral in the right-hand side of (\ref{2App}) is abolutely convergent. We let $S\in B(X)$ denote this integral.

Using Cauchy's theorem, we can rewrite (\ref{2Taylor}) as
$$
\varphi(uT_1,\ldots,uT_d) = \Bigl(\frac{1}{2\pi i}\Bigr)^d
\int_{\prod_{k=1}^d \partial E_{s_k}}
\varphi(z_1,\ldots,z_d) \prod_{k=1}^d R(z_k,uT_k)\,\prod_{k=1}^d dz_k.
$$
(We changed the contour of integration.)
Moreover (\ref{2H0}) and (\ref{2Uniform}) ensure  that  we have
\begin{align*}
\biggnorm{ \varphi(z_1,\ldots,z_d) \prod_{k=1}^d R(z_k,uT_k)} & \leq c
\biggl(\prod_{k=1}^d\prod_{j=1}^N\vert \xi_j-z_k\vert\biggr)^\gamma
\prod_{k=1}^d\norm{R(z_k,uT_k)}\\
& \leq\,\frac{c\,C}{\biggl(
\prod_{k=1}^d\prod_{j=1}^N\vert \xi_j-z_k\vert\biggr)^{1-\gamma}}\,,
\end{align*}
for all $(z_1,\ldots,z_d)\in \prod_{k=1}^d \partial E_{s_k}$ and all $u\in (0,1)$.

For any $k=1,\ldots,d$ and any $z_k\in \partial E_{s_k}\setminus E$, we have a norm convergence 
$R(z_k,uT_k)\to R(z_k,T)$, as $u\to 1$. Hence, applying the above estimate, we deduce from Lebesgue's dominated
convergence theorem that $\varphi(uT_1,\ldots,uT_d)\to S$, when $u\to 1$.
\end{proof}

Assume that  the hypotheses of Lemma \ref{2Approx} are satisfied. For
any  $\varphi\in H^\infty_0(E_{r_1}\times\cdots\times E_{r_d})$, we set 
$$
\varphi(T_1,\ldots,T_d) := 
\Bigl(\frac{1}{2\pi i}\Bigr)^d
\int_{\prod_{k=1}^d \partial E_{s_k}} 
\varphi(z_1,\ldots,z_d) \prod_{k=1}^d R(z_k,T_k)\,\prod_{k=1}^d dz_k.
$$
It is plain that  this definition 
does not depend on the choice of the $s_k\in (\rho_k,r_k)$.

We now give a few results 
that easily follow from
Lemma \ref{2Approx} 
and elementary properties of the multi-variable
Dunford-Riesz functional calculus.
They will be used silently in
the next sections. 

First, the 
mapping $\varphi\mapsto  \varphi(T_1,\ldots,T_d)$ is a
homomorphism from $H^\infty_0(E_{r_1}\times\cdots\times E_{r_d})$ into $B(X)$.
Moreover, we have
\begin{equation}\label{2Hom}
(\psi\varphi)(T_1,\ldots, T_d) = \psi(T_1,\ldots, T_d)
\varphi(T_1,\ldots, T_d),\qquad 
\varphi\in H^\infty_0(E_{r_1}\times\cdots\times E_{r_d}),\, 
\psi\in\P_d.
\end{equation}

Second, if $(T_1,\ldots,T_d)$ admits a 
bounded $H^\infty(E_{r_1}\times\cdots\times E_{r_d})$-functional calculus, then
the homomorphism $\varphi\mapsto\varphi(T_1,\ldots,T_d)$
is bounded with respect to the
$H^\infty(E_{r_1}\times\cdots\times E_{r_d})$-norm, that is, 
there exists a constant $K\geq 0$
such that
$$
\norm{\varphi(T_1,\ldots,T_d)}\leq K\norm{\varphi}_{\infty, 
E_{r_1}\times\cdots\times E_{r_d}},\qquad
\varphi\in H^\infty_0(E_{r_1}\times\cdots\times E_{r_d}).
$$

Third, 
consider a partition $\{1,\ldots,d\}
=\Gamma_1 \sqcup \Gamma_2$ into two non-empty subsets $\Gamma_1$ and $\Gamma_2$.
Let $\varphi_1\in H^\infty_0(\prod_{j\in\Gamma_1} E_{r_j})$ and $\varphi_2\in H^\infty_0(\prod_{j\in\Gamma_2} E_{r_j})$. We may define $\varphi\in H^\infty_0(E_{r_1}\times\cdots\times E_{r_d})$
by $\varphi(z_1,\ldots,z_d)=
\varphi_1((z_j)_{j\in\Gamma_1})
\varphi_2((z_j)_{j\in\Gamma_2})$. Then
$$
\varphi(T_1,\ldots,T_d)=
\varphi_1\bigl((T_j)_{j\in\Gamma_1}\bigr)
\varphi_2\bigl((T_j)_{j\in\Gamma_2}\bigr).
$$

Finally, in the case $d=1$,
let $r\in (0,1)$
and let $T$ be any Ritt$_E$ operator of type $<r$. Let $a>0$ 
and define $\varphi_{a}\in H^\infty_0(E_{r})$ by
$\varphi_{a}(z) =\prod_{j=1}^N(1-\overline{\xi_j}z)^{a}$. Then,
\begin{equation}\label{2Frac}
\varphi_{a}(T) = \prod_{j=1}^N(1-\overline{\xi_j}T)^{a}.
\end{equation}

\subsection{$R$-Ritt$_E$ operators}\label{2RRE}

For any $\sigma$-finite measure space $(\X,\mu)$ and any $1\leq p<\infty$,
we let $L^p(\X;X)$
denote the Bochner space of all measurable functions $\phi\colon\X\to X$ 
(defined up to almost everywhere zero functions) such that the norm function $t\mapsto\norm{\phi(t)}_X$
belongs to $L^p(\X)$. 
This is a Banach space for the norm $\norm{\phi}_{L^p({\mathcal X};X)} = 
\norm{\norm{\phi(\,\cdotp)}_X}_{L^p({\mathcal X})}$. 
We refer to  \cite[Chapters I-IV]{DU}  or \cite[Chapter I]{HVVW1}
for basic properties and information.

We will regard $L^p(\X) \otimes X$
as a subspace of $L^p(\X;X)$ in the usual way and we note that this subspace is dense.

Let $I$ be any countable set and let $(\varepsilon_i)_{i\in I}$ be a 
family of independent Rademacher variables on 
some probability space $(\X_0, \Pdb_0)$. We let ${\rm Rad}(I;X)$ denote the closure of the vector space 
${\rm Span}\{\varepsilon_i\otimes x\, :\, i\in I, x\in X\}$ in $L^2(\X_0;X)$ and we equip ${\rm Rad}(I;X)$
with the induced norm. Equivalently, ${\rm Rad}(I;X)$ is the closure of
${\rm Rad}_I\otimes X$ in $L^2(\X_0;X)$, where ${\rm Rad}_I
= {\rm Rad}(I;\Cdb)$.
When $I=\Ndb=\{1,2,\ldots\}$, we write ${\rm Rad}(X)$ instead of ${\rm Rad}(\Ndb;X)$
and we write ${\rm Rad}$ instead of ${\rm Rad}_{\mathbb N}$.
We refer to  \cite[Appendix A]{LM-Book} for basic information on these spaces.

A subset $\F\subset B(X)$ is called $R$-bounded if there exists a constant $C\geq 0$ such that
for all finitely supported sequences $(T_k)_{k\geq 1}$ of $\F$ and for all  finitely supported sequences
$(x_k)_{k\geq 1}$ of $X$,
$$
\Bignorm{\sum_k\varepsilon_k\otimes 
T_k(x_k)}_{{\rm Rad}(X)}\leq C
\Bignorm{\sum_k\varepsilon_k\otimes x_k}_{{\rm Rad}(X)}.
$$
In this case, we let $\R(\F)$ denote the smallest $C\geq 0$ which verifies this property.

According to (\ref{2Set-T}), the definition of Ritt$_E$ operators 
can be reformulated by saying that $T\in B(X)$ 
is Ritt$_E$  if $\sigma(T)\subset\overline{\Ddb}$ and the set
$$
\F_T : =\Bigl\{\Bigl(\prod_{j=1}^N(\xi_j-z)\Bigr)R(z,T)\, :\, z\in\Cdb,\ 1<\vert z\vert<2\Bigr\}
$$
is bounded. Following \cite[Definition 3.7]{BLM}, we say that $T$ is 
$R$-Ritt$_E$ if $\F_T$ is $R$-bounded.

\subsection{Background on Banach space geometry}\label{2Banach}
Results on $H^\infty$-functional calculus often require geometric assumptions on 
the underlying Banach space. This well-known phenomenon explicitly appears upon reading 
\cite{HVVW2} or \cite{LM-Book}.

In this context, UMD Banach
spaces were recognized as essential already in the seminal work of G. Dore and A. Venni
on bounded imaginary powers  \cite{DV}.
We refer to \cite{HVVW1, P5} for the definition and a comprehensive information 
on UMD Banach spaces. For readers unfamiliar with Banach space geometry,
we mention a few important properties: If $X$ is UMD, then
any Banach space isomorphic to $X$ is UMD;
If $X$ is UMD, then any subspace of $X$ is UMD;  
If $X$ is UMD, then its dual space $X^*$ 
is UMD.
All UMD Banach spaces are reflexive;
All Hilbert spaces, and all $L^p$-spaces, with $1<p<\infty$, are UMD; 
More generally, if $X$ is a
UMD Banach space, then any Bochner space $L^p(\X;X)$, with $1<p<\infty$,
is a UMD Banach space as well. 

We will use the notion of finite cotype, for which we refer to 
\cite{Maurey} (see also \cite[Chapter 7]{HVVW2}). We note that 
all UMD Banach spaces have finite cotype.

Let $Q\colon L^2(\X_0)\to L^2(\X_0)$ be the orthogonal projection onto ${\rm Rad}_I$.
We say that $X$ is a $K$-convex Banach space if the tensor extension
$Q\otimes I_X\colon  L^2(\X_0)\otimes X\longrightarrow  L^2(\X_0)\otimes X$
is bounded with respect to 
the $L^2(\X_0;X)$-norm. In this case, this tensor extension extends to a 
projection $L^p(\X_0;X)\to L^p(\X_0;X)$ whose range is equal to ${\rm Rad}(I;X)$. 
This property does not depend on $I$.
We refer to \cite[Chapter 13]{DJT} for general information on $K$-convexity, 
see also \cite[Chapter 5]{P5}, \cite{Maurey} and
\cite[Section 7.4]{HVVW2}. We will use the fact that UMD spaces are $K$-convex, 
see e.g. \cite[Proposition 4.3.10]{HVVW1}.

For any finitely supported families $(x_i)_{i\in I}$ and $(y_i)_{i\in I}$ in $X$ and $X^*$, respectively,
we may define
\begin{equation}\label{2Pairing}
\Bigl\langle \sum_{i\in I}\varepsilon_i\otimes y_i, \sum_{i\in I}
\varepsilon_i\otimes x_i \Bigr\rangle = \sum_{i\in I}\langle y_i,x_i\rangle,
\end{equation}
and we have
\begin{equation}\label{2CS}
\Bigl\vert \sum_{i\in I}\langle y_i,x_i\rangle\Bigr\vert\leq \Bignorm{\sum_{i\in I}\varepsilon_i\otimes x_i}_{{\rm Rad}(X)}
\Bignorm{\sum_{i\in I}\varepsilon_i\otimes y_i}_{{\rm Rad}(X^*)}.
\end{equation}
If $X$ is $K$-convex, then the duality pairing (\ref{2Pairing}) induces an isomorphic identification
\begin{equation}\label{2K}
{\rm Rad}(I;X)^*\approx {\rm Rad}(I;X^*).
\end{equation}
We refer e.g. to  \cite[Theorem 7.4.14, (2)]{HVVW2} for this result.

We set 
$$
{\rm Rad}^2(X)={\rm Rad}({\rm Rad}(X)).
$$
We say that $X$ has property $(\alpha)$
if there exists a constant $C\geq 1$ such that for all bounded families
$(z_{kj})_{k,j\geq 1}$ of $\Cdb$ and for all finitely supported families $(x_{kj})_{k,j\geq 1}$ 
of $X$, we have
\begin{equation}\label{2Alpha}
\Bignorm{\sum_{k, j} \varepsilon_k\otimes
\varepsilon_j \otimes z_{kj} x_{kj}}_{{\rm Rad}^2(X)}
\leq C \sup\bigl\{\vert z_{kj}\vert\, :\, k,j\geq 1\bigr\}
\Bignorm{\sum_{k,j} \varepsilon_k\otimes
\varepsilon_j \otimes x_{kj}}_{{\rm Rad}^2(X)}.
\end{equation}
We refer to \cite[Section 7.5]{HVVW2} for general
information on this notion (which is called Pisier's contraction
property there). We merely recall the following elementary facts:
If $X$ has property $(\alpha)$, then
any Banach space isomorphic to $X$ has property $(\alpha)$;
If $X$ has property $(\alpha)$, then any subspace of $X$ has property $(\alpha)$;  
All Hilbert spaces, and all $L^p$-spaces, with $1\leq p<\infty$, have property $(\alpha)$;
More generally, any Banach lattice with finite cotype has property $(\alpha)$.
We note that not all UMD Banach spaces have property $(\alpha)$. 
For example, for $1<p\not=2<\infty$,
non-commutative $L^p$-spaces are UMD but in general,
they do not have property $(\alpha)$.
See \cite[Appendices]{LM-Book} for details and references.

We say that $X$ has property $(\Delta)$ if it satisfies an estimate (\ref{2Alpha}) 
for the unique  family
$(z_{kj})_{k,j\geq 1}$ defined by $z_{kj}=1$ if $j\geq k$ and $z_{kj}=0$ if $j< k$.
In other words, the triangular projection is bounded on 
${\rm Rad}^2(X)$.
This notion is discussed in \cite[Subsection 7.5.b]{HVVW2}, under the name
of triangular contraction property. It is plain that any $X$ with property $(\alpha)$ has property $(\Delta)$.
A more subtle result due to N. Kalton and L. Weis asserts
that any UMD Banach space has property $(\Delta)$, see 
\cite[Proposition 3.2]{KaW1}  or \cite[Theorem 7.5.9]{HVVW2}.

The following was observed in \cite[Remark 3.12]{BLM}.

\begin{lemma}\label{2Delta} 
Let $T$ be a Ritt$_E$ operator on $X$ and 
assume that $X$ has property $(\Delta)$. If 
$T$ admits a bounded $H^\infty$-functional calculus, then $T$ is $R$-Ritt$_E$.
\end{lemma}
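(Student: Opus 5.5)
The approach follows the classical argument for sectorial operators (Kalton–Weis) and for Ritt operators, adapted to the Ritt$_E$ setting. Since $T$ is Ritt$_E$, for $1<\vert z\vert<2$ we write
$$
\Bigl(\prod_{j=1}^N(\xi_j-z)\Bigr)R(z,T)=f_z(T),\qquad f_z(\lambda)=\frac{\prod_{j=1}^N(\xi_j-z)}{z-\lambda}.
$$
Fix $r$ such that $T$ is of type $<r$ with a bounded $H^\infty(E_r)$-calculus. Then $\sup_z\norm{f_z}_{\infty,E_r}<\infty$, by the geometry of $E_r$ near the points $\xi_j$ (the estimate behind (\ref{2Set-T})). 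Uniform boundedness of the calculus only gives boundedness of $\F_T$, so property $(\Delta)$ must be used to upgrade this to $R$-boundedness.

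\textbf{Step 1 (square function estimates).} For the individual operator $T$, a bounded $H^\infty$-calculus on a space with finite cotype gives square function estimates. The plan is to use the dyadic, or "Franks–McIntosh", decomposition. Choose a sequence of functions $\psi_n\in H^\infty_0(E_r)$, for instance $\psi_n(\lambda)=\lambda^{n-1}\prod_j(\xi_j-\lambda)$ suitably normalized, or a smooth dyadic partition adapted to the $N$ cusps at the points $\xi_j$. These should satisfy $\sup_{\lambda\in E_r}\sum_n\vert\psi_n(\lambda)\vert<\infty$. Every $f$ in a uniformly bounded family, in particular each $f_z$, should then admit a decomposition
$$
f(T)=\sum_{n,m} a_{nm}(f)\,\psi_n(T)\tilde\psi_m(T),\qquad \sup\vert a_{nm}\vert\lesssim \norm{f}_\infty .
$$
Here $\tilde\psi_m$ is a second family of the same kind, and $a_{nm}$ is nonzero only for $m\geq n$, or only for $m\leq n$.

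\textbf{Step 2 (using $(\Delta)$).} For finite families $(z_k)$ and $(x_k)$, we pair $\sum_k\varepsilon_k\otimes f_{z_k}(T)x_k$ with $\sum_k\varepsilon_k\otimes y_k$ in $\mathrm{Rad}(X^*)$. The boundedness of the calculus gives unconditionality: $\norm{\sum_n \varepsilon_n\psi_n(T)x}\lesssim\norm{x}$, and similarly on the dual side. Property $(\Delta)$ then controls the triangular sum $\sum_{k}\sum_{m\geq n}$ arising in $\mathrm{Rad}^2(X)$. This is exactly the Kalton–Weis mechanism: the diagonal parts of an $H^\infty$-calculus become $R$-bounded via the triangular projection. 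By $K$-convexity, i.e. the duality (\ref{2K}), one can obtain $\mathrm{Rad}(X^*)$-estimates; alternatively, $X^*$ is controlled because $(\Delta)$ dualizes.

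\textbf{Simpler route.} A shorter path is to reduce to the known sectorial result. By the last statement of Subsection~\ref{2HFC}, each operator $A_j=I_X-\overline{\xi_j}T$ is sectorial. Near $\xi_j$, $T$ is localized, and the bounded $H^\infty(E_r)$-calculus should restrict to a bounded $H^\infty(\Sigma_\theta)$-calculus of angle $<\pi/2$ for $A_j$ modulo a holomorphic function of $T$ away from $\xi_j$. Kalton–Weis then gives that $A_j$ is $R$-sectorial under $(\Delta)$. Using a partition of unity $1=\sum_j g_j$ with $g_j\in H^\infty(E_r)$ holomorphic and vanishing near $E\setminus\{\xi_j\}$, we get
$$
f_z(T)=\sum_j g_j(T)f_z(T).
$$
Near each $\xi_j$, $g_j f_z$ is a bounded multiple of $(\xi_j-z)(z-T)^{-1}$ composed with $g_j(T)$ times bounded functions, and the $R$-sectoriality of $A_j$ gives its $R$-boundedness. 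Away from $E$, the resolvents form an analytic, hence $R$-bounded, family.

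The main obstacle is making this localization rigorous, i.e. justifying that products such as $g_j(T)\cdot h(T)$, with $h$ not in $H^\infty_0$, fit the calculus. I would try first to handle this via Lemma~\ref{2Approx} together with the approximation $uT$, $u\to1$, using that $R$-bounds are stable under strong limits.
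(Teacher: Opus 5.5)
The paper gives no argument of its own for this lemma; it quotes \cite[Remark 3.12]{BLM}. Your first route (Steps 1--2) puts the triangular structure in the wrong place. A Franks--McIntosh decomposition of a single $f$ is diagonal in the dyadic index. In the Kalton--Weis argument, property $(\Delta)$ controls a truncation in a different pair of indices: the index $k$ of the family $(z_k)$ and the dyadic index $m$. Near $\xi_j$, each $f_z$ behaves like a spectral cut-off at scale $\vert z-\xi_j\vert$, and ordering the $z_k$ by that scale yields a triangular projection on ${\rm Rad}^2(X)$. Your second route (pass to $A_j=I_X-\overline{\xi_j}T$, apply Kalton--Weis, come back) is the right one, but it leaves the decisive step as a ``should''. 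Moreover, the obstacle you name is not real. Polynomials and $\lambda\mapsto(z-\lambda)^{-1}$ with $\vert z\vert>1$ are holomorphic near $\sigma(T)$, so the Dunford--Riesz calculus handles all the products involved directly.

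Three points are actually needed.

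(a) No localization is required to get the calculus of $A_j$. Choose $\theta_j<\frac{\pi}{2}$ with $\theta_j>\arcsin r$ and $\theta_j>\vert{\rm Arg}(1-\overline{\xi_j}\xi_i)\vert$ for all $i\neq j$. Then the open convex sector $S_j=\{\xi_j(1-w)\, :\, w\in\Sigma_{\theta_j}\}$ contains $D(0,r)$ and every $\xi_i$, $i\neq j$, hence $E_r\subset S_j$. So for a rational $f$ bounded on $\Sigma_{\theta_j}$ with poles off $\overline{\Sigma_{\theta_j}}$, the function $\varphi=f(1-\overline{\xi_j}\,\cdot\,)$ is holomorphic near $\overline{E_r}$. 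It follows that $\norm{f(A_j)}=\lim_{u\to 1}\norm{\varphi(uT)}\leq K\norm{f}_{\infty,\Sigma_{\theta_j}}$. As in the proof of Proposition \ref{5Arhancet}, this gives $A_j$ a bounded $H^\infty(\Sigma_\theta)$-calculus for some $\theta<\frac{\pi}{2}$; this is one direction of \cite[Theorem 4.3]{BLM}.

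(b) You need the angle part of the Kalton--Weis theorem, $\omega_R(A_j)\leq\theta<\frac{\pi}{2}$, not mere $R$-sectoriality. For $\vert z\vert>1$, the point $\mu=1-\overline{\xi_j}z$ only satisfies $\cos({\rm Arg}\,\mu)<\vert\mu\vert/2$, so it approaches $0$ tangentially to $i\mathbb{R}$.

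(c) Localize in the parameter $z$, not in the spectral variable. For $\vert z-\xi_j\vert<\delta$ one has $\prod_i(\xi_i-z)R(z,T)=-\bigl(\prod_{i\neq j}(\xi_i-z)\bigr)\mu R(\mu,A_j)$. Fix $\omega\in(\omega_R(A_j),\frac{\pi}{2})$; once $\delta<2\cos\omega$, $\mu$ lies outside $\overline{\Sigma_\omega}$, so this family is $R$-bounded by the contraction principle. For the remaining $z$, the closure of their set is a compact subset of $\Cdb\setminus\sigma(T)$, because $\sigma(T)\cap\Tdb\subset E$. The resolvent is analytic there, hence $R$-bounded.

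Your partition of unity should be dropped. A holomorphic $g_j$ vanishing on a neighbourhood of some $\xi_i$ in the connected set $E_r$ is identically zero. If instead you use Lagrange polynomials vanishing only at the other $\xi_i$, the term $g_j(T)f_z(T)$ for $z$ near $\xi_i$ still requires $A_i$, not $A_j$.
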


\section{Square function estimates}\label{3SFE}

Let $X$ be an arbitrary Banach space. Let $d\geq 1$ and let
$\alpha=(a_1,\ldots,a_d)\in(0,\infty)^d$ be a $d$-tuple of positive real numbers. 
Let $T=(T_1,\ldots,T_d)$ be a
commuting $d$-tuple of Ritt$_E$ operators. For any $x\in X$, we set
\begin{equation}\label{3Ts}
\norm{x}_{T,\alpha}
=\lim_{m}\,
\Biggnorm{\sum_{0\leq n_1,\ldots,n_d\leq m}
\varepsilon_{n_1,\ldots, n_d}\otimes \prod_{k=1}^d 
\Bigl((n_k+1)^{a_k-\frac12} T_k^{n_k}\prod_{j=1}^N
\bigl(I_X-\overline{\xi_j}T_k\bigr)^{a_k}\Bigr)x}_{{\rm Rad}
({\mathbb N}_0^d;X)}.
\end{equation}
The norm in the right-hand side is increasing with respect to $m$, 
hence the limit defining $\norm{x}_{T,\alpha}$
exists in $[0,\infty]$. We note that $\norm{x}_{T,\alpha}$ may be equal to $\infty$.

The above definition extends other prior definitions of square functions.
Indeed, in the case of Ritt operators (that is, when $E=\{1\}$),  (\ref{3Ts}) coincides with \cite[Eq. (2.8)]{Ar}, whereas in the case
of a single operator (that is, when $d=1$),  (\ref{3Ts}) coincides with \cite[Definition 2.5]{B}.

\begin{proposition}\label{3Implication}
Assume that $X$ has finite cotype. If
$T= (T_1,\ldots,T_d)$ admits a bounded $H^\infty$-functional calculus, then
there exists a constant $K\geq 0$ such that 
\begin{equation}\label{3Est}
\norm{x}_{T,\alpha}\leq K\norm{x},\qquad x\in X.
\end{equation}
\end{proposition}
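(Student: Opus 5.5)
The plan is to use the classical route from a bounded $H^\infty$-calculus to Rademacher square function estimates. In finite cotype spaces this goes through the fact that a bounded calculus yields bounded operators $\sum_n \pm\,\phi_n(T)$ whenever $\sum_n|\phi_n|$ is uniformly bounded.

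\textbf{Step 1 (reduction to a uniform estimate).} It suffices to show that there is $K$ such that for every $m$, every choice of signs $\epsilon=(\epsilon_{n})_{n\in\{0,\ldots,m\}^d}\in\{-1,1\}^{(m+1)^d}$ and every $x\in X$,
$$
\Bignorm{\sum_{0\le n_1,\ldots,n_d\le m}\epsilon_{n}\,\Phi_{n}(T_1,\ldots,T_d)x}\leq K\norm{x},
$$
where
$$
\Phi_n(z_1,\ldots,z_d)=\prod_{k=1}^d (n_k+1)^{a_k-\frac12}z_k^{n_k}\prod_{j=1}^N(1-\overline{\xi_j}z_k)^{a_k}.
$$
Indeed, averaging the square of this inequality over the signs gives exactly the ${\rm Rad}(\mathbb N_0^d;X)$-norm in (\ref{3Ts}), bounded by $K\norm{x}$ uniformly in $m$. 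Note that $\Phi_n$ lies in $H^\infty_0$, and by (\ref{2Frac}) together with the product rule for split variables, $\Phi_n(T_1,\ldots,T_d)$ is the operator appearing in (\ref{3Ts}).

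Unfortunately this naive route requires $\sum_n|\Phi_n(z)|\le C$ on $E_{r_1}\times\cdots\times E_{r_d}$. For $a_k$ large this holds, since $\sum_n n^{a-1/2}|z|^n\approx(1-|z|)^{-a-1/2}$ and, in the Stolz-type region $E_r$, $1-|z|\gtrsim \prod_j|\xi_j-z|$. For small $a_k$ (for instance $a_k<1/2$) it fails, so we only get $\ell^1$ control where $\ell^2$ is needed.

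\textbf{Step 2 (the finite cotype step, the main obstacle).} Here finite cotype is essential. I would follow the standard scheme (Kalton--Weis; Le Merdy for Ritt operators; \cite{Ar} for tuples; \cite{B} for $d=1$). First show the estimate for one large exponent $\beta=(b_1,\ldots,b_d)$, with each $b_k$ large, via Step 1. Then pass to arbitrary $\alpha$ by the equivalence of square functions of different orders. The key lemma is
$$
\norm{x}_{T,\alpha}\lesssim \norm{x}_{T,\beta}+\norm{x}
$$
in finite cotype spaces. To obtain it, write $\Phi^\alpha_n$ as a series $\sum_m c_{n,m}\Phi^\beta_m$ using the binomial-type expansion of $\prod_j(1-\overline{\xi_j}z)^{a-b}$ in powers of $z$, handled one coordinate at a time. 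Then use that Rademacher sums in finite cotype are dominated, up to Gaussian comparison, so that Schur-type kernel bounds $\sup_n\sum_m|c_{n,m}|$ and $\sup_m\sum_n|c_{n,m}|$ control the transfer on ${\rm Rad}(X)$.

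Alternatively, and more cleanly, I would use the generalized Franks--McIntosh decomposition. A bounded $H^\infty$-calculus on a finite cotype space yields, for functions $(\Phi_n)$ with $\bigl(\sum_n|\Phi_n(z)|^2\bigr)^{1/2}$ bounded in the appropriate averaged sense, a Rademacher estimate $\mathbb E\norm{\sum\varepsilon_n\Phi_n(T)x}\lesssim\norm{x}$. This is proved by duality with $X^*$ and the decomposition $\Phi_n=\sum_i\alpha_{n,i}\psi_i\tilde\psi_i$, with $\psi_i,\tilde\psi_i$ dyadic-localized near each $\xi_j$ and in each variable, and it works in the multivariable setting since the domain is a product. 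Concretely, the partial sums of $\Phi_n$ are dominated by $\prod_k\prod_j|\xi_j-z_k|^{a_k}(1-|z_k|)^{-a_k}$, which is bounded on $\prod_k E_{r_k}$ but gives only $\ell^2$, not $\ell^1$, summability. Finite cotype is exactly what upgrades $\ell^2$ control plus the bounded calculus (Kalton--Weis: $\sum_n|\Phi_n|^2$ bounded and $\sum\pm$ over dyadic blocks bounded) to the Rademacher bound.

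The hardest part is making this decomposition work for a $d$-fold product of polygonal domains uniformly in $m$. I would first treat each variable via a dyadic splitting in $n_k$ ($2^i\le n_k<2^{i+1}$). Within a block, the bounded $H^\infty$ calculus controls $\sum\pm$ over the block, since $\ell^1$ sums over a dyadic block are fine once normalized. Across blocks, the cotype of ${\rm Rad}(X)$ applied iteratively in each coordinate (using that ${\rm Rad}(X)$ again has finite cotype) gives the square summation.
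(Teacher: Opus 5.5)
Your second route has the right skeleton, and it is the paper's: a quadratic estimate of the form (\ref{3Quadratic}) coming from a product Franks--McIntosh decomposition, followed by $\sup_{z\in E_r}\sum_n\vert\psi_{n,a}(z)\vert^2<\infty$, where $\psi_{n,a}(z)=(n+1)^{a-\frac12}z^n\prod_j(1-\overline{\xi_j}z)^a$. Your check of that $\ell^2$ bound via $\prod_j\vert\xi_j-z\vert\lesssim 1-\vert z\vert$ on $E_r$ is correct. But both concrete mechanisms you propose for getting the Rademacher bound fail.

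First, the $\ell^1$ claim of Step 1 is false for \emph{every} $a>0$, not only for small $a$. As $z\to\xi_j$ radially (inside $E_r$),
\[
\sum_n\vert\psi_{n,a}(z)\vert\approx(1-\vert z\vert)^{a}\,(1-\vert z\vert)^{-a-\frac12}=(1-\vert z\vert)^{-\frac12}.
\]
So there is no ``large exponent'' base case, and the change-of-exponent route never starts.

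Second, the block argument fails. On a block $2^i\le n<2^{i+1}$ one has $\sup_z\sum_n\vert\psi_{n,a}(z)\vert\approx 2^{i/2}$, attained at $1-\vert z\vert\approx 2^{-i}$. So the calculus only bounds each block by $2^{i/2}\norm{x}$. Moreover, cotype is a \emph{lower} estimate, $(\sum_k\norm{x_k}^q)^{1/q}\lesssim\norm{\sum_k\varepsilon_k\otimes x_k}$. No iteration of it can produce an \emph{upper} bound on $\norm{\sum_n\varepsilon_n\otimes\Phi_n(T)x}$ by square-summing over blocks.

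What is missing is a correct mechanism by which finite cotype enters the proof of (\ref{3Quadratic}); the paper imports this estimate from Arrigoni's argument. The standard mechanism runs as follows.

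The decomposition $\varphi=\sum_m\alpha_m(\varphi)\psi_m\tilde\psi_m$ has \emph{linear} coefficient functionals with $\vert\alpha_m(\varphi)\vert\lesssim\norm{\varphi}_\infty$, and $\sum_m\vert\psi_m\tilde\psi_m\vert\le C$. Linearity gives $(\sum_i\vert\alpha_m(\varphi_i)\vert^2)^{1/2}\lesssim\norm{(\sum_i\vert\varphi_i\vert^2)^{1/2}}_\infty$ for each $m$.

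Passing to Gaussians via (\ref{3RG}), you get $\sum_i g_i\otimes\varphi_i(T)x=\sum_m\beta_m\otimes y_m$, with $\beta_m=\sum_i\alpha_m(\varphi_i)g_i$ in a fixed ball of $G_I$ and $y_m=\psi_m(T)\tilde\psi_m(T)x$. The bounded calculus gives $\sup_{\vert t_m\vert\le1}\norm{\sum_mt_my_m}\lesssim\norm{x}$.

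Finite cotype is exactly what turns this unconditional bound into
\[
\Bignorm{\sum_m\beta_m\otimes y_m}_{L^2(\X_1;X)}\lesssim\sup_{\vert t_m\vert\le1}\Bignorm{\sum_mt_my_m}.
\]
By Maurey's theorem, the map $t\mapsto\sum_mt_my_m$ from $\ell^\infty$ into $X$ is $r$-summing for some $r<\infty$. The Pietsch factorization together with $\sup_m\mathbb{E}\vert\beta_m\vert^r<\infty$ then gives the bound.

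Without this ingredient, or an equivalent one, your proposal does not establish (\ref{3Quadratic}), and hence not (\ref{3Est}).
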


\begin{proof} 
By assumption,  $(T_1,\ldots,T_d)$ admits a bounded 
$H^\infty(E_{\rho_1}\times\cdots\times E_{\rho_d})$-functional
calculus for some $\rho_1,\ldots,\rho_d$ in $(0,1)$. For every $k=1,\ldots,d$,
let $r_k\in(\rho_k,1)$. We claim that there exists a constant $K\geq 0$ such that
for any finite set $I$ and for all families $(\varphi_i)_{i\in I}$ of 
$H^\infty_{0}(E_{r_1}\times\cdots\times E_{r_d})$
and all $x\in X$, 
\begin{equation}\label{3Quadratic}
\Bignorm{\sum_{i\in I}\varepsilon_i\otimes \varphi_i(T_1,\ldots,T_d)x}_{{\rm Rad}(I;X)}
\leq K\norm{x}\Bignorm{\Bigl(\sum_{i\in I}\vert \varphi_i\vert^2\Bigr)^\frac12}_{\infty, 
E_{r_1}\times\cdots\times E_{r_d}}.
\end{equation}
This result is proved by O. Arrigoni
in \cite[Proposition 3.4]{Ar} for Ritt operators.
Arrigoni's proof relies on a Franks-McIntosh type decomposition adapted to Ritt operators, which was
established in \cite[Theorem 6.1]{ArLM1}, and on a factorization result 
observed in \cite[Remark 6.3]{ArLM1}. In \cite[Proposition 4.6 and Lemma 4.7]{B},
the aforementioned
decomposition result \cite[Theorem 6.1]{ArLM1} and factorization result 
\cite[Remark 6.3]{ArLM1} are extended to the context of Ritt$_E$ operators.
With these two results in hand, it is easy to adapt the proof of  \cite[Proposition 3.4]{Ar}
to Ritt$_E$ operators and to obtain (\ref{3Quadratic}). We skip the details.

Now we apply the above estimate with $I=\{0,\ldots,m\}^d$ and, 
for any $(n_1,\dots,n_d)\in I$,
$$
\varphi_{n_1,\ldots,n_d}(z_1,\ldots,z_d) = \prod_{k=1}^d 
(n_k+1) ^{a_k-\frac12} z_k^{n_k}\prod_{j=1}^N
(1-\overline{\xi_j}z_k)^{a_k},\qquad
z_1\in E_{r_1},\ldots, z_d\in E_{r_d}.
$$
These functions belong to $H^\infty_{0}(E_{r_1}\times\cdots\times E_{r_d})$ and by
(\ref{2Hom}) and (\ref{2Frac}), 
$$
\varphi_{n_1,\ldots,n_d}(T_1,\ldots,T_d) = \prod_{k=1}^d 
(n_k+1)^{a_k-\frac12} T_k^{n_k}\prod_{j=1}^N
\bigl(I_X-\overline{\xi_j}T_k\bigr)^{a_k}.
$$
To obtain the estimate (\ref{3Est}), it therefore  suffices to show that
\begin{equation}\label{3Sup}
\sup_{m\geq 1}\, 
\Bignorm{\Bigl(\sum_{0\leq n_1,\ldots,n_d\leq m}\vert \varphi_{n_1,\ldots,n_d}\vert^2\Bigr)^\frac12}_{\infty, 
E_{r_1}\times\cdots\times E_{r_d}}\,<\infty.
\end{equation}
To prove this, we introduce
$$
\psi_{n,a}(z) = (n+1)^{a-\frac12}z^{n} \prod_{j=1}^N
(1-\overline{\xi_j}z)^{a},\qquad
z\in\Ddb,
$$
for all $n\geq 0$ and $a\in(0,\infty)$. Recall the Stolz domains
$B_\beta$, $\beta\in\bigl(0,\frac{\pi}{2}\bigr)$, used
in the study of Ritt operators \cite[Figure 1]{LM-2014}. 
For any $r\in(0,1)$, there exist $\beta\in\bigl(0,\frac{\pi}{2}\bigr)$
and a covering
$$
E_r=D(0,r)\bigcup\bigl(\Omega_1\cup\cdots\cup\Omega_N\bigr)
$$
where, for each $j=1,\ldots,N$,
$\Omega_j$ is an open subset of $\xi_j B_\beta$.
For any $z\in E_r$, we have
\begin{equation}\label{3Sum}
\sum_{n=0}^\infty \vert \psi_{n,a}(z)\vert^2
=\prod_{j=1}^N \vert 1-\overline{\xi_j}z \vert^{2a}
\sum_{n=0}^\infty (n+1)^{2a-1}\vert z\vert^{2n}.
\end{equation}
It is clear that the right-hand side of the above identity
is uniformly bounded on $D(0,r)$. 
Let $1\leq j_0\leq N$. By \cite[Lemma 3.6]{Ar}, 
$$
\vert 1-\overline{\xi_{j_0}}z \vert^{2a}
\sum_{n=0}^\infty (n+1)^{2a-1}\vert z\vert^{2n}
$$
is uniformly bounded on $\Omega_{j_0}$, hence 
the right-hand side of (\ref{3Sum}) 
is uniformly bounded on $\Omega_{j_0}$. We deduce that
$$
\sup_{z\in E_r}\,\biggl(
\sum_{n=0}^\infty \vert \psi_{n,a}(z)\vert^2\biggr)\,<\infty.
$$
This implies (\ref{3Sup}), because for any $n_1,\ldots,n_d\geq 0$ and 
for any $z_1\in E_{r_1},\ldots,
z_d\in E_{r_d}$, we have
$$
\varphi_{n_1,\ldots,n_d}(z_1,\ldots,z_d)
=\prod_{k=1}^d \psi_{n_k,a_k}(z_k).
$$
\end{proof}

\begin{remark}\label{3CV}
Let $x\in X$ such that $\norm{x}_{T,\alpha}<\infty$. 
If $X$ does not contain $c_0$, that is, $X$ 
has no subspace isomorphic to $c_0$, then 
the family
$$
y_{n_1,\ldots,n_d} : = \varepsilon_{n_1,\ldots,n_d}\otimes 
\prod_{k=1}^d \biggl(
(n_k+1)^{a_k-\frac12} T_k^{n_k}\prod_{j=1}^N
(I_X-\overline{\xi_j}T_k)^{a_k}\biggr)x,\qquad (n_1,\ldots,n_d)\in \Ndb_0^{d},
$$
is summable in ${\rm Rad}(\Ndb_0^d;X)$. Indeed, the assumption
$\norm{x}_{T,\alpha}<\infty$ ensures that the finite sums of 
the $y_{n_1,\ldots,n_d}$ are uniformly bounded. Then, 
summability follows from  \cite{Kwa}.

Therefore, if $X$ has finite cotype and
$(T_1,\ldots,T_d)$ admits a bounded $H^\infty$-functional calculus, then for all 
$x\in X$, we obtain from Proposition \ref{3Implication} an element 
$$
\sum_{n_1,\ldots,n_d=0}^\infty
\varepsilon_{n_1,\ldots, n_d}\otimes \prod_{k=1}^d 
\biggl((n_k+1)^{a_k-\frac12} T_k^{n_k}\prod_{j=1}^N
\bigl(I_X-\overline{\xi_j}T_k\bigr)^{a_k}\biggr)x\,\in{\rm Rad}(\Ndb_0^{d};X),
$$
with norm $\leq K\norm{x}$, for some $K\geq 0$ not depending on $x$. Indeed, 
a Banach space with finite cotype does not contain $c_0$.
\end{remark}

We shall now use Gaussian spaces, which are defined in a parallel manner to the Rademacher
spaces.
Let $I$ be a countable set and let  $(g_i)_{i\in I}$ be a family of independent 
standard complex valued Gaussian variables on some probability space $(\X_1,\Pdb_1)$.
We let $G(I;X)\subset L^2(\X_1;X)$ be the closure
of ${\rm Span}\{g_i\otimes x\, :\, i\in I, x\in X\}$.

Assume that $X$ has finite cotype. Then the linear map
$$
{\rm Span}\{\varepsilon_i\otimes x\, :\, i\in I, x\in X\}
\longrightarrow  {\rm Span}\{g_i\otimes x\, :\, i\in I, x\in X\},
$$
which takes $\varepsilon_i\otimes x$ to $g_i\otimes x$ for all $i\in I$ and all $x\in X$,
extends to an isomorphic identification 
\begin{equation}\label{3RG}
{\rm Rad}(I;X)\approx G(I;X).
\end{equation}
We refer e.g. to \cite[Proposition 3.2 (ii)]{Pis85} or \cite[Proposition 12.27]{DJT} for this result.
(Note that this isomorphism holds only if $X$ has finite cotype, 
see \cite[Chapter 9]{LeTa}.)

With identification (\ref{3RG}) at our disposal,
the following is an immediate consequence of Proposition \ref{3Implication} and Remark \ref{3CV}.

\begin{corollary}\label{3Gaussian}
Assume that $X$ has finite cotype and that
$(T_1,\ldots,T_d)$ admits a bounded 
$H^\infty$-functional calculus. Then for any $x\in X$, 
the family
$$
g_{n_1,\ldots,n_d}\otimes \prod_{k=1}^d 
\biggl((n_k+1)^{a_k-\frac12} T_k^{n_k}\prod_{j=1}^N
\bigl(I_X-\overline{\xi_j}T_k\bigr)^{a_k}\biggr)x,\qquad 
(n_1,\ldots,n_d)\in \Ndb_0^{d},
$$
is summable in $G(\Ndb_0^d;X)$, and there exists a 
constant $K\geq 0$ such that 
$$
\Biggnorm{\sum_{n_1,\ldots,n_d =0}^\infty
g_{n_1,\ldots, n_d}\otimes \prod_{k=1}^d 
\biggl((n_k+1)^{a_k-\frac12} T_k^{n_k}\prod_{j=1}^N
(I_X-\overline{\xi_j}T_k)^{a_k}\biggr)
x\,}_{G({\mathbb N}_0^d;X)}\leq
K\norm{x},\qquad x\in X.
$$
\end{corollary}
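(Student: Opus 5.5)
The argument transfers Proposition \ref{3Implication} and Remark \ref{3CV} from Rademacher to Gaussian sums, using the identification (\ref{3RG}) with $I=\Ndb_0^d$.

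Fix $x\in X$ and, for $(n_1,\ldots,n_d)\in\Ndb_0^d$, set
$$
x_{n_1,\ldots,n_d}=\prod_{k=1}^d \Bigl((n_k+1)^{a_k-\frac12} T_k^{n_k}\prod_{j=1}^N\bigl(I_X-\overline{\xi_j}T_k\bigr)^{a_k}\Bigr)x.
$$
Since $X$ has finite cotype, Proposition \ref{3Implication} gives a constant $K$, independent of $x$, with $\norm{x}_{T,\alpha}\leq K\norm{x}$. Finite cotype also implies that $X$ does not contain $c_0$. Remark \ref{3CV} therefore shows that $\sum\varepsilon_{n_1,\ldots,n_d}\otimes x_{n_1,\ldots,n_d}$ is summable in ${\rm Rad}(\Ndb_0^d;X)$, with sum of norm $\leq K\norm{x}$.

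Let $\Phi\colon{\rm Rad}(\Ndb_0^d;X)\to G(\Ndb_0^d;X)$ be the isomorphism of (\ref{3RG}), defined by $\varepsilon_i\otimes y\mapsto g_i\otimes y$. Being bounded and linear, $\Phi$ maps summable families to summable families and commutes with the summation. Hence $\bigl(g_{n_1,\ldots,n_d}\otimes x_{n_1,\ldots,n_d}\bigr)$ is summable in $G(\Ndb_0^d;X)$, and its sum has norm at most $\norm{\Phi}\,K\norm{x}$. This gives the statement with constant $\norm{\Phi}K$.

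There is no real obstacle. The only point to check is that (\ref{3RG}) holds for the countable index set $\Ndb_0^d$. It does: the isomorphism in (\ref{3RG}) is stated for an arbitrary countable set $I$, with a constant depending only on the cotype of $X$.
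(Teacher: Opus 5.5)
Your proposal is correct and follows the paper's own argument. The paper states the corollary as an immediate consequence of Proposition \ref{3Implication}, Remark \ref{3CV} (summability in ${\rm Rad}(\Ndb_0^d;X)$, since a finite-cotype space does not contain $c_0$) and the identification (\ref{3RG}); your two added checks, that the bounded isomorphism carries the summable Rademacher family onto the Gaussian one and that the constant in (\ref{3RG}) does not depend on the countable index set, are exactly the details the paper leaves implicit.
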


Set $G_I=G(I;\Cdb)$, then $G_I\otimes X$ is a dense subspace of $G(I;X)$.
The following is a straightforward consequence of \cite[Corollary 12.17]{ DJT}.

\begin{lemma}\label{3Extension}
Let $V\colon G_I\to G_I$ be any bounded operator. 
Then the tensor extension $V\otimes I_X$ on $G_I\otimes X$ uniquely
extends to a bounded operator $V\overline{\otimes} I_X\colon G(I;X)\to G(I;X)$,
with 
$$
\norm{V \overline{\otimes} I_X}=\norm{V}.
$$
\end{lemma}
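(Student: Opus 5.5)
The plan is to reduce everything to one elementary isometry property of Gaussian sums, namely that the norm of a finite Gaussian sum depends only on the Hilbert space geometry of its coefficients.

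\emph{Step 1: unitaries.} Suppose first that $V$ is the restriction to $G_I$ of an operator that maps the standard Gaussian family to another standard Gaussian family. More concretely, take $V$ unitary on the finite-dimensional space ${\rm Span}\{g_i : i\in F\}$ for some finite $F\subset I$, and equal to the identity elsewhere. Write $V(g_i)=\sum_{l} v_{li}\,g_l$. Since the matrix $(v_{li})$ is unitary, the family $(V g_l)_{l\in F}$ is again a family of independent standard complex Gaussian variables. Rotation invariance of the complex Gaussian measure on $\Cdb^F$ then gives
$$
\Bignorm{\sum_{i} V(g_i)\otimes x_i}_{L^2(\X_1;X)}=\Bignorm{\sum_i g_i\otimes x_i}_{L^2(\X_1;X)}.
$$
So such a $V$ induces an isometry of $G_I\otimes X$.

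\emph{Step 2: contractions.} Next take $V$ with $\norm{V}\le 1$. Restricted to a finite set of Gaussian sums, we may view $V$ as a contractive matrix $A$ from $\ell^2_F$ to $\ell^2_{F'}$. The key point is that every contraction is an average of unitaries, or more simply can be realised as a compression of a unitary. Concretely, the unitary dilation
$$
W=\begin{pmatrix} A & (1-AA^*)^{1/2}\\ (1-A^*A)^{1/2} & -A^*\end{pmatrix}
$$
acts on a doubled finite index set, after we pad $F,F'$ to a common size. Then $V\otimes I_X$ equals $P\,(W\otimes I_X)\,J$. Here $J$ is the inclusion of the first block, which is isometric on Gaussian sums. $P$ is the projection onto the first block, which is a conditional expectation with respect to the first set of independent Gaussians and is therefore contractive in $L^2(\X_1;X)$. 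By Step 1, $W\otimes I_X$ is isometric. Hence $\norm{(V\otimes I_X)y}\le\norm{y}$ for all $y\in G_I\otimes X$.

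\emph{Step 3: extension and conclusion.} By homogeneity, Step 2 gives $\norm{(V\otimes I_X)y}\le \norm{V}\norm{y}$ for a general bounded $V$, which yields the extension $V\overline{\otimes} I_X$ by density. To handle countably infinite $I$, note that the image $V(g_i)$ may have infinite Gaussian expansion. We approximate $V$ by its compressions $P_{F'}VP_F$ to finite sets and pass to the limit, using that $\sum_l v_{li}\,g_l\otimes x$ converges in $L^2(\X_1;X)$. The reverse inequality $\norm{V\overline{\otimes}I_X}\ge\norm{V}$ follows by testing on $y=h\otimes x$ with $\norm{x}=1$, since $\norm{h\otimes x}=\norm{h}_{G_I}$.

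The main obstacle is Step 2, the passage from unitaries to arbitrary contractions. The rest is routine; alternatively, the whole statement can be quoted directly from \cite[Corollary 12.17]{DJT}.
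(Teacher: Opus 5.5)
Your proof is correct, but it does not follow the paper, which gives no argument: it declares the lemma a straightforward consequence of \cite[Corollary 12.17]{DJT}. That reference is, as used there, the Gaussian contraction principle for finite scalar matrices: a matrix of operator norm at most one does not increase Gaussian averages in $L^2(\mathcal{X}_1;X)$.

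You instead prove that principle from scratch:
\begin{itemize}
\item unitary invariance of the law of a standard complex Gaussian vector handles unitaries;
\item the Halmos dilation handles contractions, together with the observation that discarding the auxiliary Gaussian block is a conditional expectation, hence contractive on $L^2(\mathcal{X}_1;X)$ for every $X$;
\item finite compressions $P_{F'}VP_F$ handle infinite $I$;
\item elementary tensors $h\otimes x$ give the reverse inequality $\norm{V\overline{\otimes}I_X}\ge\norm{V}$.
\end{itemize}
The citation is shorter. Your route is self-contained and shows that the only property of $(g_i)$ used is unitary invariance of their joint law. That is precisely why, as the paper remarks, the Rademacher analogue fails.

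Two details deserve a sentence in a write-up. First, the auxiliary block needs a second Gaussian family independent of the first, on an enlarged probability space if $I$ is finite. This is harmless, since the norm of a Gaussian sum depends only on the joint law. Second, elements of the algebraic tensor product $G_I\otimes X$ involve $h\in G_I$ that may have infinite Gaussian expansions. The bound proved for finite Gaussian sums must therefore be passed to them by approximating each $h$ in $G_I$, exactly as in your Step 3.

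Finally, the first alternative you mention is actually the simpler one. By the singular value decomposition, a finite square matrix of norm at most one is the average of two unitaries. So Step 1 and the triangle inequality give Step 2 directly, with no dilation and no conditional expectation.
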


We note  in passing that the analogue of this lemma for 
${\rm Rad}(I;X)$ is false. This is the reason why we resort 
to Gaussian variables in our use of square functions.

\begin{remark}
To ``explain" why  $\norm{x}_{T,\alpha}$ in (\ref{3Ts})  is called a square function, we recall 
that if $X$ is a Banach lattice with finite cotype, then ${\rm Rad}(I;X)\approx X(\ell^2_I)$
for any countable set $I$; see e.g  \cite[Subsection 9.3.b]{HVVW2} or \cite[§A.13]{LM-Book}.
Therefore, in this case, there exist two constants $0<c<C$ such that for all $x\in X$,
$$
c\norm{x}_{T,\alpha}\leq
\biggnorm{\biggl(\sum_{n_1,\ldots,n_d =0}^\infty \prod_{k=1}^d 
(n_k+1)^{2a_k-1} \biggl\vert \prod_{k=1}^d 
T_k^{n_k}\prod_{j=1}^N
\bigl(I_X-\overline{\xi_j}T_k\bigr)^{a_k}x\biggr\vert^2\biggr)^\frac12}_X\leq C
\norm{x}_{T,\alpha}.
$$
Likewise, when $X=H$ is a Hilbert space, then ${\rm Rad}(I;H)= \ell^2_I(H)$ isometrically, hence
$$
\norm{x}_{T,\alpha} = \biggl(\sum_{n_1,\ldots,n_d = 0}^\infty\prod_{k=1}^d 
(n_k+1)^{2a_k-1}
\biggnorm{\prod_{k=1}^d  T_k^{n_k}\prod_{j=1}^N
\bigl(I_X-\overline{\xi_j}T_k\bigr)^{a_k}x}^2\biggr)^\frac12,\qquad x\in H.
$$
\end{remark}

\section{Proof of Theorem \ref{1Main1}}\label{4M1} 
We consider a commuting $d$-tuple $(T_1,\ldots,T_d)$ of Ritt$_E$-operators 
on some UMD Banach space $X$
and we wish to prove Theorem \ref{1Main1} for this family.
It is plain that (ii) implies (iii), so we only need to show that (i) 
implies (ii) and that (iii) implies (i).

Let  $(b_{n})_{n\geq 0}$ denote the sequence of complex numbers provided by the Taylor expansion,
\begin{equation}\label{taylor series expansion}
\frac{1}{\prod_{j=1}^{N}(1-\overline{\xi_j}z)}=\sum_{n=0}^{\infty}b_{n}z^{n},\qquad z\in\Ddb.
\end{equation}
By \cite[Lemma 3.2]{LMRMN}, this sequence is bounded. 

We set
$$
B := \prod_{k=1}^d\prod_{j=1}^N(I_X-~\overline{\xi_j}T_k).
$$
The following lemma is a sort of generalization of \cite[Lemma 3.3]{LMRMN}. 
It holds on any Banach space $X$.

\begin{lemma}\label{4identity Sk}
Let $(T_1,\ldots, T_d)$ be a $d$-tuple of
commuting Ritt$_E$ operators on 
$X$. For any $x\in{\rm Ran}(B),$
the family of $b_{n_1}\cdots b_{n_d}T_1^{n_1}\cdots T_d^{n_d}
Bx$, for $(n_1,\ldots,n_d)\in\Ndb_0^d$, is summable in $X$, and we have
$$
\sum_{n_1,\ldots,n_d=0}^\infty b_{n_1}\cdots b_{n_d}T_1^{n_1}\cdots T_d^{n_d}
Bx = x.
$$
\end{lemma}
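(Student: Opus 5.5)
We will write $x=By$ with $y\in X$, and set $P_k=\prod_{j=1}^N(I_X-\overline{\xi_j}T_k)$ for $k=1,\ldots,d$. Then $B=P_1\cdots P_d$, the $P_k$ commute with each other and with all the $T_l$, and
$$
b_{n_1}\cdots b_{n_d}T_1^{n_1}\cdots T_d^{n_d}Bx=\prod_{k=1}^d\bigl(b_{n_k}T_k^{n_k}P_k^2\bigr)y .
$$
The plan is to prove absolute summability first, and then to identify the sum by computing the partial sums over the boxes $\{0,\ldots,m\}^d$.

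\emph{Summability.} Since $\prod_j(\xi_j-T_k)$ and $P_k$ differ by a unimodular scalar factor, (\ref{2C1}) gives $\norm{T_k^{n}P_k}\leq C_1/(n+1)$ for $n\geq 0$. Write $T_k^{2n}P_k^2=(T_k^nP_k)^2$ for even exponents, and $T_k^{2n+1}P_k^2=T_k\,(T_k^nP_k)^2$ for odd ones. Using power boundedness of $T_k$, we get $\norm{T_k^{n}P_k^2}\leq C_2/(n+1)^2$. Since $(b_n)$ is bounded by \cite[Lemma 3.2]{LMRMN}, the norm of the $(n_1,\ldots,n_d)$-term is at most $C\prod_k (n_k+1)^{-2}\norm{y}$. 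This is summable over $\Ndb_0^d$, so the family is absolutely summable. In particular its sum equals the limit of the box partial sums as $m\to\infty$.

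\emph{Identification of the sum.} Write $q(z)=\prod_{j=1}^N(1-\overline{\xi_j}z)=\sum_{l=0}^N c_lz^l$. From (\ref{taylor series expansion}) we have $\bigl(\sum_{n\geq0}b_nz^n\bigr)q(z)=1$. Truncating at $m\geq N$ therefore gives a polynomial identity
$$
\Bigl(\sum_{n=0}^m b_nz^n\Bigr)q(z)=1+\sum_{p=m+1}^{m+N}e_{p}^{(m)}z^p,
$$
where each $e_p^{(m)}$ is a finite combination of the $c_l$ and of $b_{m-N+1},\ldots,b_m$. Hence $\sup_{m,p}|e_p^{(m)}|<\infty$.

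Applying this identity to $T_k$ and multiplying by $P_k$ gives
$$
\Bigl(\sum_{n=0}^m b_nT_k^n\Bigr)P_k^2=P_k+R_{k,m},\qquad R_{k,m}=\sum_{p=m+1}^{m+N}e_p^{(m)}T_k^pP_k .
$$
By the estimate $\norm{T_k^pP_k}\leq C_1/(p+1)$ we obtain $\norm{R_{k,m}}\leq C'/m\to0$. The box partial sum is then
$$
\prod_{k=1}^d(P_k+R_{k,m})\,y ,
$$
which converges in norm to $P_1\cdots P_dy=By=x$, because all factors are uniformly bounded. This proves the stated identity.

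The only slightly delicate point is the bookkeeping in the truncated Taylor identity, namely checking that the error coefficients $e_p^{(m)}$ stay bounded. That reduces to the boundedness of $(b_n)$ together with the fact that $q$ has fixed degree $N$. Everything else follows directly from (\ref{2C1}).
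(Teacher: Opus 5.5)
Your proof is correct. The summability part matches the paper's: both halve the exponent and use (\ref{2C1}) to get $\norm{T^nP^2}\lesssim (n+1)^{-2}$, then use the boundedness of $(b_n)$. The difference is in how you identify the sum.

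The paper first proves the one-variable operator identity $\sum_{n\geq0}b_nT^nP^2=P$. It does this by applying the Dunford--Riesz functional calculus to $uT$ for $u\in(0,1)$, where the Taylor expansion (\ref{taylor series expansion}) can be substituted directly. It then lets $u\to1$, with the passage to the limit justified by \cite[Lemma 3.2]{B}. The $d$-variable statement follows by multiplying these identities over $k$.

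You instead truncate the formal identity $\bigl(\sum_n b_nz^n\bigr)q(z)=1$ at degree $m$. This leaves a remainder supported in degrees $m+1,\ldots,m+N$, with coefficients built from $b_{m-N+1},\ldots,b_m$ and hence bounded. You then kill this remainder using only the decay $\norm{T^pP}\leq C_1/(p+1)$.

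Your route is more elementary and self-contained. It needs no holomorphic functional calculus and no estimate uniform in $u$. It uses only (\ref{2C1}), power boundedness and the boundedness of $(b_n)$, and it even gives an explicit $O(1/m)$ rate for the box partial sums. The paper's route is shorter once the cited lemma is available, and it fits the functional-calculus framework used throughout.
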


\begin{proof}
If $T$ is a Ritt$_E$ operator,  then $\bignorm{T^{2n}\prod_{j=1}^{N}(\xi_j
-T)^2}\lesssim \frac{1}{n^2}$, by (\ref{2C1}). Hence, we have an estimate
$$
\Bignorm{T^{n}\prod_{j=1}^{N}(I_X-\overline{\xi_j} T)^2}\lesssim \frac{1}{n^2}.
$$
Thus, using the boundedness of $(b_n)_{n\geq 1}$,
we may define an operator 
$$
\sum_{n=0}^\infty b_n T^n \prod_{j=1}^{N}(I_X-\overline{\xi_j} T)^2\,\in B(X),
$$
the series being absolutely convergent.
By the Dunford-Riesz functional calculus and 
(\ref{taylor series expansion}), we have
$$
\sum_{n=0}^\infty b_n (uT)^n \prod_{j=1}^{N}(I_X-\overline{\xi_j} uT)^2 =  \prod_{j=1}^{N}(I_X-\overline{\xi_j} uT),
$$
for all $u\in(0,1)$. Letting $u\to 1$ and applying \cite[Lemma 3.2]{B}, we deduce that
$$
\sum_{n=0}^\infty b_n T^n \prod_{j=1}^{N}(I_X-
\overline{\xi_j}T)^2\,=\,
\prod_{j=1}^{N}(I_X-\overline{\xi_j} T).
$$
Since $T_1,T_2,\ldots,T_d$ are commuting 
Ritt$_E$ operators, we may apply the above to each $T_k$. 
Then, multiplying the resulting identities, we obtain
that the family of $b_{n_1}\cdots b_{n_d}T_1^{n_1}\cdots T_d^{n_d}
B^2$ is summable in $B(X)$, with
$$
\sum_{n_1,\ldots,n_d=0}^\infty b_{n_1}\cdots b_{n_d}T_1^{n_1}\cdots T_d^{n_d}
B^2 = B.
$$
The result follows at once.

% Thus by uniform boundedness of $S_k$, see (\ref{uniform bound of Sk}) we get $$S_k(T)x\rightarrow x$$ as $k\rightarrow\infty.$
% Since, $X$ has finite cotype and $T=(T_1,\ldots,T_d)$ admits a bounded $H^{\infty}-$ functional calculus, by Corollary \ref{3Gaussian}, with $a_k = \frac{1}{2},$ gives
% $$
% 		\Biggnorm{\sum_{n_1,\ldots,n_d =1}^\infty
% 			g_{n_1,\ldots, n_d}\otimes \prod_{k=1}^d  T_k^{n_k-1}\prod_{j=1}^N
% 			(I_X-\overline{\xi_j}T_k)^{\frac12}x\,}_{G({\mathbb N}^d;X)}\leq
% 		K\norm{x},\qquad x\in X.
% 		$$
%         If $X$ is of cotype $q$ for some $2\le q< \infty$,
%         $$\left(\sum_{n_1,\ldots,n_d =1}^\infty
% 			 \Biggnorm{\prod_{k=1}^d  T_k^{n_k-1}\prod_{j=1}^N
% 			(I_X-\overline{\xi_j}T_k)^{\frac12}x\,}_{X}^q\right)^{\frac1q}\le \Biggnorm{\sum_{n_1,\ldots,n_d =1}^\infty
% 			g_{n_1,\ldots, n_d}\otimes \prod_{k=1}^d  T_k^{n_k-1}\prod_{j=1}^N
% 			(I_X-\overline{\xi_j}T_k)^{\frac12}x\,}_{G({\mathbb N}^d;X)}.$$

%  Then, the same holds for all $x$ in the closure space, $\overline{\rm Ran}\biggl(\prod_{k=1}^d\prod_{j=1}^N (I-\overline{\xi_j}T_k)\biggr).$ 
    \end{proof}
% \begin{theorem}
%     Let $X$ be a reflexive Banach space and let T be a power bounded operator. Then
%     \begin{align}
%       X = Ker(IX − T ) ⊕ Ran(IX − T ).  
%     \end{align}
% Further if we let PT : X → X denote the projection with range Ran(PT ) = Ker(IX − T ) and
% kernel Ker(PT ) = Ran(IX − T ), then
% 
% 
% lim Mn (T ) (x) = PT (x)
% (1.21)
% n
% for all x ∈ X.
% \end{theorem}
% Let $x\in Ran(\prod_{i=1}^{d}\prod_{j=1}^{N}(I-\overline{\xi}_jT_i).$ Then, indeed $$x= Ran(\prod_{i=1}^{d}\prod_{j=1}^{N}(I-\overline{\xi}_jT_i)y$$, for some $y.$ Thus,
% \begin{align*}
%     \sum_{n_1,n_2,\ldots,n_d=0}^{\infty}\norm{b_{n_1}b_{n_2}\ldots b_{n_d}T_1^{n_1}&T^{n_2}\ldots T_d^{n_d}\prod_{i=1}^{d}\prod_{j=1}^{N}(I-\overline{\xi}_jT_i)x}\\=&\sum_{n_1,n_2,\ldots,n_d=0}^{\infty}\norm{b_{n_1}b_{n_2}\ldots b_{n_d}T_1^{n_1}T^{n_2}\ldots T_d^{n_d}\prod_{i=1}^{d}\prod_{j=1}^{N}(I-\overline{\xi}_jT_i)^2y}
%     <\infty.
% \end{align*}
% Thus, one may define unambiguously 
% $$\sum_{n_1,n_2,\ldots,n_d=0}^{\infty}b_{n_1}b_{n_2}\ldots b_{n_d}T_1^{n_1}T^{n_2}\ldots T_d^{n_d}\prod_{i=1}^{d}\prod_{j=1}^{N}(I-\overline{\xi}_jT_i)x.$$

\subsection{(i) implies (ii), special case}\label{4Special}
We assume that $(T_1,\ldots,T_d)$ admits a bounded
$H^\infty$-functional calculus. We further assume that 
\begin{equation}\label{Simple}
\overline{\rm Ran}\left(\prod_{k=1}^d\prod_{j=1}^N (I_X-\overline{\xi_j}T_k)\right)=X.
\end{equation}
The general case treated
in the next sub-section will rely on this special case.

The fact that every $T_k$ is an $R$-Ritt$_E$ operator follows from Lemma 
\ref{2Delta}.

We define
$$
A := \prod_{k=1}^{d}\prod_{j=1}^{N}(I_X-\overline{\xi_j}T_k)^{\frac{1}{2}},
$$
and we recall  $B=A^2$.
We set $\mathcal{C} = \{0,1\}^d$, and we let $(e_c)_{c\in \mathcal C}$ denote the standard 
Hilbertian basis of $\ell^2_{\mathcal C}$.
\iffalse
For each $c\in\mathcal{C},$ denote by $e_c$ the canonical basis vector in the space $l_{2^d}^p,$ with $e_0=e_{0,0,\ldots,0} =I_{l_{2^d}^p} $ for some $1<p<\infty.$
		Let $1<p'<\infty$ such that $$\frac{1}{p}+\frac{1}{p'}=1.$$ 
\fi
We define 
$$
J\colon X \longrightarrow \ell^2_{\mathcal C}\otimes 
G(\mathbb{Z}^d;X)
\qquad\hbox{and}\qquad
\tilde{J}\colon  X^* \longrightarrow \ell^2_{\mathcal C}\otimes G(\mathbb{Z}^d;X^*)
$$
as follows. 
For $x\in X$ and $y\in X^*$, 
\begin{align*}
J(x) & =\sum_{c\in\mathcal{C}}\sum_{m_1,\ldots,m_d=0}^\infty e_c\otimes  g_{m_1,\ldots, m_d}\otimes T^{m_1}_1\cdots T^{m_d}_dAx,\\
\tilde{J}(y) &=\sum_{c\in\mathcal{C}} \sum_{m_1,\ldots,m_d=0}^\infty
e_c\otimes g_{m_1,\ldots,m_d}
\otimes(b_{2m_1+c_1}\cdots b_{2m_d+c_d})
T^{*m_1}_1\cdots T^{*m_d}_d{A^*}(T^{c_1}_1\cdots T^{c_d}_d)^*y. 
\end{align*}
Being UMD, the Banach space $X$ has finite cotype.
Hence by Corollary \ref{3Gaussian}, applied with $a_k = \frac{1}{2}$ for each $k\in\{1,2,\ldots,d\}$, we see that $J$ 
is a well-defined bounded operator. Next,
$(T_1^*,\ldots,T_d^*)$ also admits a bounded
$H^\infty$-functional calculus and $X^*$ has finite cotype. 
Moreover the sequence $(b_n)_{n\geq 0}$ is bounded.
Therefore, $\tilde{J}$ is also a well-defined bounded operator.

For each $k\in\{1,2,\ldots,d\}$, define the operator 
$U_k\colon \ell^2_{\mathcal C} \otimes G(\mathbb{Z}^d;X)
\rightarrow \ell^2_{\mathcal C} \otimes G(\mathbb{Z}^d;X)$ by 
\begin{align*}
U_k\biggl(\sum_{c\in\mathcal{C}}\sum_{(m_1,\dots,m_d)\in{\mathbb Z}^d} 
e_c\otimes g_{m_1,\ldots,m_d} \otimes & 
x(c)_{m_1,\ldots,m_d} \biggr)= \\ &
\sum_{c\in\mathcal{C}}\sum_{(m_1,\dots,m_d)\in{\mathbb Z}^d} 
e_c\otimes g_{m_1,\ldots,m_d}\otimes x(c)_{m_1,\ldots,m_k
+1,\ldots,m_d}.
\end{align*}
In other words, each $U_k$ is a shift operator,
that shifts the $k$-th index of each term by one unit.
Clearly, the operators $U_1,U_2,\ldots,U_d$ are well defined and 
commute with each other. Let us equip 
$\ell^2_{\mathcal C} \otimes G(\mathbb{Z}^d;X)$ 
with the norm
$$
\Bignorm{\sum_{c\in{\mathcal C}} e_c\otimes
\gamma(c)} = \Bigl(\sum_{c\in\mathcal C}
\bignorm{\gamma(c)}_{G(\mathbb{Z}^d;X)}^2\Bigr)^\frac12,\qquad
\gamma(c)\in G(\mathbb{Z}^d;X),
$$
that is, we regard it as
$$
Y : = \ell^2_{\mathcal C}\bigl(G(\Zdb^d;X)\bigr).
$$
Then each $U_k$ is an isometric isomorphism on $Y$.

Let $V_1,\ldots,V_d$ be the shift operators on
$G_{{\mathbb Z}^d}$ defined by 
$$
V_k\biggl(\sum_{(m_1,\dots,m_d)\in{\mathbb Z}^d} 
t_{m_1,\ldots,m_d} \,g_{m_1,\ldots,m_d} \biggr)=
\sum_{(m_1,\dots,m_d)\in{\mathbb Z}^d} 
t_{m_1,\ldots, m_{k}+1,\ldots,m_d} \,g_{m_1,\ldots,m_d}.
$$
(Here, $t_{m_1,\ldots,m_d}\in\Cdb$.)
By the Spectral theorem for commuting normal operators,
the $d$-tuple $(V_1,\ldots,V_d)$ is polynomially bounded. More
precisely,
$$
\norm{\varphi(V_1,\ldots,V_d)}\leq \norm{\varphi}_{\infty,{\mathbb D}^d},\qquad \varphi\in\P_d.
$$
It is plain that for any $k$, $U_k$ is the unique extension of $I_{\ell^2_{\mathcal C}}\otimes 
V_k\otimes I_X$. Hence for all $\varphi\in\P_d$, 
$\varphi(U_1,\ldots,U_d)$ is the unique extension of
$I_{\ell^2_{\mathcal C}}\otimes 
\varphi(V_1,\ldots,V_d)\otimes I_X$. Applying Lemma \ref{3Extension},
we deduce that $(U_1,\ldots,U_d)$ is polynomially bounded.

Let $x\in X$. 
For any $k\in\{1,2,\ldots,d\}$ and 
any $n\geq 0$, we have 
$$
U_k^nJx = \sum_{c\in\mathcal{C}}\sum_{m_1,\ldots,m_d=0}^\infty
e_c\otimes  g_{m_1,\ldots, m_d}\otimes T^{m_1}_1\cdots T_k^{m_k+n}\cdots T^{m_d}_dA x.
$$
Hence for any $n_1,\ldots, n_k\geq 0$, we have
$$
U_1^{n_1}\cdots U_d^{n_d}J x = 
\sum_{c\in\mathcal{C}}\sum_{m_1,\ldots,m_d=0}^\infty
e_c\otimes  
g_{m_1,\ldots,m_d}\otimes T^{m_1+n_1}_1\cdots T^{m_k+n_k}_k
\cdots T^{m_d+n_d}_dAx.
$$

Being UMD, the Banach space $X$ is $K$-convex and reflexive. Hence
according to (\ref{2K}) and (\ref{3RG}),
we may regard $G(\mathbb{Z}^d;X)$
as the dual space of $G(\mathbb{Z}^d;X^*)$ (isomorphically). 
Therefore, we may regard $\ell^2_{\mathcal C}\otimes
G(\mathbb{Z}^d;X)$
as the dual space of $\ell^2_{\mathcal C}\otimes G(\mathbb{Z}^d;X^*)$.
This allows to consider the adjoint operator $\tilde{J}^*\colon 
\ell^2_{\mathcal C}\otimes G(\mathbb{Z}^d;X)\to X$. Then,  
for any $x\in{\rm Ran}(B)$ and any $y\in X^*$, we have
\begin{align*}
\bigl\langle y, &\tilde{J}^*U_1^{n_1}\cdots U_d^{n_d}J x\bigr\rangle
\\ &
= \langle \tilde{J}(y),U_1^{k_1}\ldots U_d^{k_d}J x\rangle \\
&=\sum_{c\in\mathcal C}
\sum_{m_1,\ldots,m_d=0}^\infty
(b_{2m_1+c_1}\cdots b_{2m_d+c_d})\bigl\langle
T_1^{*(m_1+c_1)}\ldots T_d^{*(m_d+c_d)} A^*y, T_1^{m_1+n_1}\cdots T_d^{m_d+n_d}
Ax\bigr\rangle\\
&=\sum_{c\in\mathcal C}
\sum_{m_1,\ldots,m_d=0}^\infty
(b_{2m_1+c_1}\cdots b_{2m_d+c_d})\bigl\langle
y, T_1^{2m_1 +c_1+n_1}\cdots T_d^{2m_d +c_d+n_d}Bx\bigr\rangle\\
& =\sum_{p_1,\ldots,p_d=0}^\infty (b_{p_1}\cdots b_{p_d}) \bigl\langle y, T_1^{p_1+n_1}\cdots T_d^{p_d+n_d}Bx\bigr\rangle\\
&= \sum_{p_1,\ldots,p_d=0}^\infty (b_{p_1}\cdots b_{p_d}) \bigl\langle y, T_1^{p_1}\cdots T_d^{p_d}B
\bigl(T_1^{n_1}\cdots T_d^{n_d}x\bigr) \bigr\rangle\\
&=\bigl\langle y, T^{n_1}_1\cdots T^{n_d}_d x\bigr\rangle,
\end{align*}
by Lemma \ref{4identity Sk}, since $T_1^{n_1}\cdots T_d^{n_d}x$ belongs to ${\rm Ran}(B)$.
Thus, for any $(n_1,n_2,\ldots,n_d)\in\mathbb{N}_0^d$, we have
\begin{align}\label{dilation of Ti}
T^{n_1}_1\cdots T^{n_d}_d(x) = \tilde{J}^*U^{n_1}_1\cdots U^{n_d}_d J(x),
\end{align}
for every $x\in {\rm Ran}(B).$
We assumed, see (\ref{Simple}), 
that ${\rm Ran}(B)$ is a dense subspace of $X$. 
Therefore, (\ref{dilation of Ti}) holds for every $x\in X$ and 
hence, $(U_1,U_2,\ldots,U_d)$ is  an isometric dilation of 
$(T_1,T_2,\ldots,T_d)$ on $Y$.
Since $G(\Zdb^d;X)$ is a subspace of $L^2(\X_1;X)$, this is a UMD Banach space.
Consequently, $Y$ is a UMD Banach space and therefore,
(ii) is satisfied.

\subsection{(i) implies (ii), general  case}
We will use a direct sum decomposition and the following two 
elementary lemmas, valid on any Banach space $X$.

\begin{lemma}\label{4Invariant}
Assume that $X=Z\oplus Z'$ for two closed subspaces
$Z,Z'\subset X$. Let $T\in B(X)$ such that $Z$ and $Z'$ are $T$-invariant. Then
$$
\overline{\rm Ran}\bigl(T_{\vert Z\to Z}\bigr) = \overline{\rm Ran}(T)\cap Z.
$$   
\end{lemma}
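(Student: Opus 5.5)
We prove the two inclusions separately, using only that $Z$ and $Z'$ are closed, $T$-invariant, and complementary. Let $P\colon X\to X$ be the bounded projection onto $Z$ along $Z'$, which exists by the closed graph theorem since $X=Z\oplus Z'$ with both summands closed.

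The inclusion ``$\subset$'' is immediate. Since $Z$ is $T$-invariant, ${\rm Ran}(T_{\vert Z\to Z})=T(Z)$ is contained in both ${\rm Ran}(T)$ and $Z$. Because $\overline{\rm Ran}(T)$ and $Z$ are closed, so is their intersection. Taking closures therefore gives
$$
\overline{\rm Ran}\bigl(T_{\vert Z\to Z}\bigr)\subset \overline{\rm Ran}(T)\cap Z.
$$

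For the reverse inclusion, the key observation is that $P$ commutes with $T$. To see this, write $x=z+z'$ with $z\in Z$ and $z'\in Z'$. Then $Tx=Tz+Tz'$ with $Tz\in Z$ and $Tz'\in Z'$ by invariance of both summands, so $PTx=Tz=TPx$.

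Now let $x\in\overline{\rm Ran}(T)\cap Z$ and choose $y_n\in X$ with $Ty_n\to x$. Applying the bounded operator $P$ gives $PTy_n\to Px=x$, since $x\in Z$. By the commutation relation, $PTy_n=T(Py_n)$ with $Py_n\in Z$, so each $PTy_n$ lies in ${\rm Ran}(T_{\vert Z\to Z})$. Hence $x\in\overline{\rm Ran}(T_{\vert Z\to Z})$, which proves ``$\supset$'' and therefore the lemma.

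There is no serious obstacle here. The one point to handle carefully is that the invariance of $Z'$, and not only of $Z$, is what makes $P$ commute with $T$, and that commutation is exactly what pushes the approximating sequence $(Ty_n)$ into $T(Z)$.
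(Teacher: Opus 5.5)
Your proposal is correct and follows essentially the same route as the paper: split the approximating preimages $y_n$ along $Z\oplus Z'$ and use invariance of both summands to see that the $Z$-component of $Ty_n$, namely $T(Py_n)$, still converges to $x$. Your version has the advantage of stating explicitly the boundedness of the projection $P$ onto $Z$ along $Z'$ (from the closed graph theorem). The paper's ``therefore, $T(z_n)\to x$'' uses this boundedness without saying so.
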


\begin{proof}
The inclusion $\subset$ is obvious. Conversely, 
let $x\in \overline{\rm Ran}(T)\cap Z$ and let 
$(y_n)_{n\geq 1}$ be a sequence of $X$ such that $T(y_n)\to x$. For any $n\geq 1$, let
$z_n \in Z$ and $z'_n\in Z'$ be the unique elements such that $y_n=z_n+z'_n$. Then 
$T(z_n)\in Z$ and $T(z'_n)\in Z'$ and therefore, $T(z_n)\to x$. Thus,
$x$ belongs to $\overline{\rm Ran}\bigl(T_{\vert Z\to Z}\bigr)$.
\end{proof}

\begin{lemma}\label{4DirectSumDilation}
Assume that $X=Z\oplus Z'$ for two closed subspaces
$Z,Z'\subset X$. Let $(T_1,\ldots,T_d)$ be a commuting $d$-tuple of operators on $X$
such that $Z$ and $Z'$ are $T_k$-invariant for every $k=1,\ldots,d$.
Set $T_{k,Z}= T_{k\vert Z\to Z}$ and $T_{k,Z'}= T_{k\vert Z'\to Z'}$. 
If $(T_{1,Z},\ldots, T_{d,Z})$ and $(T_{1,Z'},\ldots, T_{d,Z'})$ 
both admit isometric dilations (resp. polynomially
bounded isometric dilations, resp. polynomially
bounded isometric dilations on a UMD Banach space),
then the same holds true for $(T_1,\ldots,T_d)$.
\end{lemma}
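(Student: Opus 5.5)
The plan is to take the direct sum of the two given dilations. By assumption there are Banach spaces $Y_Z$ and $Y_{Z'}$, bounded maps $J_Z\colon Z\to Y_Z$, $Q_Z\colon Y_Z\to Z$, $J_{Z'}\colon Z'\to Y_{Z'}$, $Q_{Z'}\colon Y_{Z'}\to Z'$, and commuting isometric isomorphisms $U_{1,Z},\ldots,U_{d,Z}$ on $Y_Z$ and $U_{1,Z'},\ldots,U_{d,Z'}$ on $Y_{Z'}$. They satisfy $T_{1,Z}^{n_1}\cdots T_{d,Z}^{n_d}=Q_ZU_{1,Z}^{n_1}\cdots U_{d,Z}^{n_d}J_Z$, and similarly on $Z'$. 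Let $P\colon X\to Z$ and $P'=I_X-P\colon X\to Z'$ be the bounded projections associated with the decomposition $X=Z\oplus Z'$; they are bounded by the closed graph theorem, or directly since the decomposition is topological.

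Set $Y=Y_Z\oplus_p Y_{Z'}$ with an $\ell^p$-direct sum norm. I would take $p=2$, which is harmless and convenient for UMD. Define $U_k=U_{k,Z}\oplus U_{k,Z'}$. Each $U_k$ is an isometric isomorphism of $Y$, because it acts isometrically and surjectively on each summand and the norm is an $\ell^2$-sum. The $U_k$ commute because their components do. Define $J\colon X\to Y$ by $Jx=(J_ZPx,J_{Z'}P'x)$, and $Q\colon Y\to X$ by $Q(y,y')=Q_Zy+Q_{Z'}y'$. Both maps are bounded.

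For the dilation identity, let $x\in X$. Since $Z$ and $Z'$ are $T_k$-invariant, $T_1^{n_1}\cdots T_d^{n_d}x$ equals $T_{1,Z}^{n_1}\cdots T_{d,Z}^{n_d}Px+T_{1,Z'}^{n_1}\cdots T_{d,Z'}^{n_d}P'x$. By the assumed dilation identities on $Z$ and $Z'$, this is $QU_1^{n_1}\cdots U_d^{n_d}Jx$.

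For polynomial boundedness, note that $\varphi(U_1,\ldots,U_d)=\varphi(U_{1,Z},\ldots)\oplus\varphi(U_{1,Z'},\ldots)$ for every $\varphi\in\P_d$. Its norm on the $\ell^2$-sum is at most the maximum of the two norms. Hence it is bounded by $\max(K,K')\norm{\varphi}_{\infty,\Ddb^d}$, where $K$ and $K'$ are the polynomial bounds of the two dilations.

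For the UMD case, a finite direct sum $Y_Z\oplus_2 Y_{Z'}$ of UMD spaces is UMD. This holds because it is isomorphic to a complemented subspace of $L^2(\{0,1\};\cdot)$-type constructions, or more directly because the UMD property is stable under finite direct sums, as martingale difference estimates split componentwise. I expect no real obstacle. The only point needing care is that the projections $P$ and $P'$ are bounded, which is part of the meaning of $X=Z\oplus Z'$ as a topological direct sum of closed subspaces.
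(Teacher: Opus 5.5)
Your proof is correct and is essentially the paper's argument: both take the direct sum of the two dilations, $W_k=U_k\oplus V_k$. The paper uses the $\ell^\infty$-sum $Y\overset{\infty}{\oplus} Y'$ where you use the $\ell^2$-sum, which is immaterial since either norm makes $W_k$ an isometric isomorphism and keeps the sum UMD. You also write out the details the paper leaves implicit: the bounded projections, the maps $J$ and $Q$, and the polynomial bound $\max(K,K')$.
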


\begin{proof}
By assumption, $(T_{1,Z},\ldots, T_{d,Z})$ admits an
isometric dilation $(U_1,\ldots, U_d)$ on 
some $Y$ and $(T_{1,Z'},\ldots, T_{d,Z'})$ admits an
isometric dilation $(V_1,\ldots, V_d)$ on 
some $Y'$. Define the direct sum operator
$W_k=U_k\oplus V_k\colon Y\overset{\infty}{\oplus} Y'\to
Y\overset{\infty}{\oplus} Y'$ for each
$k=1,\ldots, d$. Then $(W_1,\ldots, W_d)$ is an
isometric dilation of $(T_1,\ldots,T_d)$.
The entire statement follows.
\end{proof}

We now assume that $X$ is a UMD Banach space and that
$(T_1,\ldots,T_d)$ admits a bounded
$H^\infty$-functional calculus, and we aim at proving property (ii) of Theorem
\ref{1Main1}. As indicated in the previous sub-section,
the fact that every $T_k$ is an $R$-Ritt$_E$ operator follows from Lemma 
\ref{2Delta}.

By the Mean ergodic theorem for reflexive Banach spaces, for 
every Ritt$_E$ operator $T$ on $X$, we have a
direct sum decomposition,
\begin{equation}\label{4k}
X=\Bigl[\underset{j=1}{\overset{N}{\oplus}}{\rm Ker}
\left(I_X-\overline{\xi_j}T\right)\Bigr]
\oplus\overline{\rm Ran}\left(\prod_{j=1}^{N}
\left (I_X-\overline{\xi_j}T
\right)\right).
\end{equation}
This is stated in \cite[Lemma 3.4]{LMRMN} in the case
when $X$ is a Hilbert space and the proof given for it 
applies verbatim to all reflexive spaces.

We first apply the above decomposition for $T=T_1$, that is, we write
\begin{equation}\label{4k=1}
X=\Bigl[\underset{j=1}{\overset{N}{\oplus}}
{\rm Ker}\left(I-\overline{\xi_j}T_1\right)\Bigr]
\oplus\overline{\rm Ran}
\left(\prod_{j=1}^{N}
\left(I_X -\overline{\xi_j}T_1\right)\right).
\end{equation}
Then we consider $T_2$.
Each summand in the decomposition (\ref{4k=1}) is 
$T_2$-invariant and, of course, reflexive.
Hence, we can apply (\ref{4k}) with
$X$ replaced by any of the summands in (\ref{4k=1})
and with $T=T_2$ restricted to that summand. For instance, the first summand $Z_1={\rm Ker}\left(I-\overline{\xi_1}T_1\right)$
can be decomposed as
$$
Z_1 =
\Bigl[\underset{i=1}{\overset{N}{\oplus}}
{\rm Ker}(I_X-\overline{\xi_1}T_1)\cap 
{\rm Ker}(I_X-\overline{\xi_i}T_2)\Bigr]\oplus
\overline{\rm Ran}\left(\biggl(\underset{i=1}{\overset{N}{\prod}}(I_X-\overline{\xi_i} T_2)\biggr)_{\vert Z_1
\to Z_1}
\right).
$$
Applying Lemma \ref{4Invariant}, this reads
$$
Z_1 =
\Bigl[\underset{i=1}{\overset{N}{\oplus}}
{\rm Ker}(I_X-\overline{\xi_1}T_1)\cap 
{\rm Ker}(I_X-\overline{\xi_i}T_2)\Bigr]\oplus
\Bigl[{\rm Ker}\left(I-\overline{\xi_1}T_1\right)\cap
\overline{\rm Ran}\left(\underset{i=1}{\overset{N}{\prod}}(I_X-\overline{\xi_i} T_2)
\right)\Bigr].
$$
Applying this to all summands of in (\ref{4k=1}), we obtain
the decomposition
\begin{align*}
X& =\Bigl[\underset{i,j=1}{\overset{N}{\oplus}}
{\rm Ker}(I_X-\overline{\xi_j}T_1)\cap 
{\rm Ker}(I_X-\overline{\xi_i}T_2)\Bigr]\oplus
\Bigl[\underset{j=1}{\overset{N}{\oplus}}
{\rm Ker}(I_X -\overline{\xi_j}T_1)\cap
\overline{\rm Ran}\left(\underset{i=1}{\overset{N}{\prod}}(I_X-\overline{\xi_i} T_2)
\right)\Bigr] \\&\oplus \Bigl[
\underset{\jmath=1}{\overset{N}{\oplus}}\overline{\rm Ran}
\left(\underset{j=1}{\overset{N}{\prod}}(I_X-\overline{\xi_j}T_1)\right)\cap 
{\rm Ker}(I_X-\overline{\xi_i}T_2)\Bigr]
\oplus\overline{\rm Ran}
\left(\underset{j=1}{\overset{N}{\prod}}\underset{i=1}{\overset{N}{\prod}}
(I_X-\overline{\xi_j} T_1)(I_X-\overline{\xi_i} T_2)\right).
\end{align*}

For any $\Lambda\subset\{1,2,\cdots,d\}$, we define
$$
B_\Lambda = \underset{k\in\Lambda}{\prod}\, 
\underset{j=1}{\overset{N}{\prod}}
(I_X-\overline{\xi_j} T_k)
\qquad\hbox{and}\qquad
F_\Lambda =\overline{\rm Ran}(B_\Lambda),
$$
and we set $\Lambda^c=\{1,\ldots,d\}\setminus\Lambda$.
Then arguing as above, we obtain (by induction) the direct sum decomposition
\begin{equation}\label{4X}
X= \underset{\Lambda}{\oplus}\left(
\underset{j\in \{1,\ldots,N\}^{\Lambda^c}}{\oplus}
\left(\underset{k\in\Lambda^c}{\cap} {\rm Ker}(I_X -\overline{\xi_{j_k}} T_k)\right)
\cap F_\Lambda\right).
\end{equation}
Each summand in (\ref{4X})
is $T_k$-invariant for every $k=1,\ldots,d$. 
Hence by Lemma \ref{4DirectSumDilation}, it suffices to show that 
for every $\Lambda\subset\{1,2,\cdots,d\}$ and for every
$j\in \{1,\ldots,N\}^{\Lambda^c}$, the restriction of $(T_1,\ldots,T_d)$ 
to the space
$$
Z: = \left(\underset{k\in\Lambda^c}{\cap} {\rm Ker}(I_X -\overline{\xi_{j_k}} T_k)\right)
\cap F_\Lambda
$$
admits a polynomially bounded isometric dilation on some UMD Banach space.

To prove this, assume for simplicity
that $\Lambda=\{m+1,\ldots d\}$ for some $m\in\{0,\ldots,d\}$
(the general case can be proved similarly), so that
$$
Z = {\rm Ker}(I_X -\overline{\xi_{j_1}} T_1)
\cap \cdots\cap{\rm Ker}(I_X -\overline{\xi_{j_m}} T_m)
\cap \overline{\rm Ran}(B_\Lambda),
$$
for a certain $m$-tuple $(j_1,\ldots,j_m)$ of $\{1,\ldots,N\}$.
The case $m=d$ corresponds to $\Lambda=\emptyset$ and 
$B_\Lambda=I_X$.

It is plain that $\overline{\rm Ran}(B_\Lambda)\cap Z=Z$.
Moreover each summand in $(\ref{4X})$ is $B_\Lambda$-invariant.
Hence by Lemma \ref{4Invariant}, we have
$$
Z =\overline{\rm Ran}\bigl(B_{\Lambda\vert Z\to Z}\bigr) = 
\overline{\rm Ran}\left(
\underset{k=m+1}{\overset{d}{\prod}} 
\underset{i=1}{\overset{N}{\prod}} \left(I_X-\overline{\xi_i} 
T_{k,Z}\right)\right),
$$
where we denoted $T_{k,Z}$ for $T_{k\vert Z\to Z}$.

It follows from Lemma \ref{2Runge} that the sub-family $(T_{m+1},
\ldots,T_d)$ admits a bounded $H^\infty$- functional calculus
on $X$. Consequently, $(T_{m+1,Z},\ldots,T_{d,Z})$ 
admits a bounded $H^\infty$-functional calculus
on $Z$. 
Applying the special case treated in Sub-section \ref{4Special}, we
deduce that the $d$-tuple 
$(T_{m+1,Z},\ldots,T_{d,Z})$
admits a polynomially bounded isometric dilation 
$(U_{m+1},\ldots,U_d)$ on some UMD Banach space
$Y$. Thus, we have two bounded operators $J\colon Z\to Y$ and
$Q\colon Y\to Z$ such that
$$
T_{m+1}^{n_{m+1}}\cdots T_d^{n_d}= QU_{m+1}^{n_{m+1}}\cdots U_d^{n_d}J,
\qquad n_{m+1},\ldots,n_d\geq 0.
$$
For any $k=1,\ldots,m$, define $U_k=\xi_{j_k} I_Y\in B(Y)$.
Since $\vert \xi_{j_k}\vert =1$,
these operators are isometric isomorphisms and hence, 
$(U_1,\ldots,U_d)$ is a commuting $d$-tuple 
of isometric isomorphisms on $Y$. Clearly, this family 
is polynomially bounded.
Let $x\in Z$ and consider integers $n_{1},\ldots,n_d\geq 0.$
For every $k=1,\ldots m$, $Z\subset {\rm Ker}(I_X -\overline{\xi_{j_k}})$ 
hence $T_k(x) = \xi_{j_k}x$. Therefore, 
\begin{align*}
T_{1}^{n_{1}}\cdots T_d^{n_d}x & = T_{m+1}^{n_{m+1}}\cdots T_d^{n_d}\bigl(
T_{1}^{n_{1}}\cdots T_m^{n_m}x\bigr)\\
& = \xi_{j_1}^{n_1}\cdots \xi_{j_m}^{n_m}\, 
T_{m+1}^{n_{m+1}}\cdots T_d^{n_d}x
\\
& = \xi_{j_1}^{n_1}\cdots \xi_{j_m}^{n_m} \, QU_{m+1}^{n_{m+1}}\cdots U_d^{n_d}Jx\\
&=QU_{1}^{n_{1}}\cdots U_d^{n_d}Jx.
\end{align*}
This yields the dilation property.

\subsection{(iii) implies (i)}
This proof is an extension of the proof of \cite[Theorem 5.1]{LMRMN} 
to the Banach space case. We will use ingredients of the latter.
We assume (iii). That  $(T_1,\ldots,T_d)$ admits a polynomially 
bounded isometric dilation readily implies that $(T_1,\ldots,T_d)$ 
itself is polynomially bounded. Indeed, if $(T_1,\ldots,T_d)$
satisfies Definition \ref{2PB}, then $\varphi(T_1,\ldots,T_d)=
Q\varphi(U_1,\ldots,U_d)J$ for all $\varphi\in\P_d$, which implies
$$
\norm{\varphi(T_1,\ldots,T_d)}\leq\norm{Q}\norm{J} 
\norm{\varphi(U_1,\ldots,U_d)},\qquad\varphi\in\P_d.
$$
In turn, polynomial boundedness implies the existence of  
a  constant $K\geq 1$ such that
\begin{equation}\label{4PB}
\norm{\Phi(uT_1,\ldots,uT_d)}\leq K\norm{\Phi}_{\infty,{\mathbb D}^d},
\qquad \Phi\in H^\infty(\Ddb^d), u\in (0,1).
\end{equation}
The proof of this estimate is the same as the one of the implication
``$(ii)\Rightarrow(i)$" in Lemma \ref{2Runge}, so we omit it.

By assumption, every $T_k$ is $R$-Ritt$_E$. Hence, by \cite[Lemma 3.1]{B} and 
\cite[Remark 2.7]{BLM},
there exist $0<r<s<1$ such that $\sigma(T_k)\subset\overline{E_r}$ for 
every $k=1,\dots,d$, $\partial E_s\cap\overline{E_r}=E$, and the set
$$
\F_1 : = \Bigl\{\Bigl(\prod_{j=1}^N(\xi_j-z)\Bigr)R(z,uT_k)\, :\, k=1,\ldots,d,\,
u\in(0,1),\, z\in\partial E_s\setminus E\Bigr\}
$$
is $R$-bounded.

Our goal is to prove an estimate
\begin{equation}\label{4Goal}
\norm{\varphi(uT_1,\ldots,uT_d)}\lesssim 
\norm{\varphi}_{\infty, E_s^d},\qquad \varphi\in\P_d,\,
u\in (0,1),
\end{equation}
where $\lesssim$ stands for an inequality up to a constant that depends
neither on $\varphi$ or $u$. This estimate immediately extends to $u=1$,
hence by Lemma \ref{2Runge}, it implies the expected result that 
$(T_1,\ldots,T_d)$ admits a bounded $H^\infty$-functional calculus.
We now fix some $\varphi\in\P_d$.

Let $(\theta_i)_{i\geq 1}$, $(\phi_i)_{i\geq 1}$ and $(\psi_i)_{i\geq 1}$ 
be three sequences of $H^\infty(\Ddb)$ that 
satisfy the four properties (a), (b), (c) and (d)
of the proof of \cite[Theorem 5.1]{LMRMN}. 
Using the identity 
$$
1=\sum_{i=1}^\infty \theta_i(z)\phi_i(z)\psi_i(z),\qquad z\in\Ddb,
$$
given by 
property (d), we may write, for any $u\in (0,1)$,
$$
\varphi(uT_1, \ldots,uT_d) = \lim_n S(n,u), 
$$
where
$$
S(n,u) =  
\sum_{i_1,\ldots,i_d=1}^{n}\, 
\varphi(uT_1,\ldots,uT_d)\prod_{k=1}^d \theta_{i_k}(uT_k)
\prod_{k=1}^d \phi_{i_k}(uT_k)\prod_{k=1}^d \psi_{i_k}(uT_k).
$$

For any 
$i_1,\ldots,i_d\geq 1$ and any $u\in(0,1)$, we have
\begin{align*}
(2\pi i)^d  \,&\varphi(uT_1,\ldots,uT_d)\prod_{k=1}^d \theta_{i_k}(uT_k)\\
&=  \int_{(\partial E_s)^d} \varphi(z_1,\ldots,z_d)\prod_{k=1}^d 
\bigl(\theta_{i_k}(z_k) R(z_k,uT_k)\bigr)\,\prod_{k=1}^d dz_k\\
&=  \int_{(\partial E_s)^d} \varphi(z_1,\ldots,z_d)\prod_{k=1}^d 
\Bigl(\prod_{j=1}^N(\xi_j-z_k)
R(z_k,uT_k)\Bigr)\,\prod_{k=1}^d \Biggl(\frac{
\theta_{i_k}(z_k)}{\prod_{j=1}^N(\xi_j-z_k)}\Biggr)\,   
\prod_{k=1}^d dz_k.
\end{align*}
Hence, by \cite[Proposition 8.5.2]{HVVW2}, by property (c) in the
proof of \cite[Theorem 5.1]{LMRMN} and by
the $R$-boundedness 
of $\F_1$, the set 
$$
\F_2 :=\Bigl\{\varphi(uT_1,\ldots,uT_d)\prod_{k=1}^d \theta_{i_k}(uT_k)\,
:\, i_1,\ldots,i_d\geq 1,\, u\in(0,1)\Bigr\}
$$
is $R$-bounded, with an estimate
\begin{equation}\label{4F2}
\R\bigl(\F_2\bigr)\lesssim \norm{\varphi}_{\infty, E_s^d}.
\end{equation}

Let $x\in X$ and $y\in X^*$. By (\ref{2CS}), we have
$$
\bigl\vert \langle S(n,u)x,y\rangle\bigr\vert\leq A(x) B(y),
$$
where
$$
A(x) = \biggnorm{\sum_{i_1,\ldots,i_d=1}^{n}\varepsilon_{i_1,\ldots,i_d}
\otimes \varphi(uT_1,\ldots,uT_d)\prod_{k=1}^d \theta_{i_k}(uT_k)
\prod_{k=1}^d \phi_{i_k}(uT_k)x}_{{\rm Rad}({\mathbb N}^d;X)}
$$
and
$$
B(y) = \biggnorm{\sum_{i_1,\ldots,i_d=1}^{n}\varepsilon_{i_1,\ldots,i_d}
\otimes \prod_{k=1}^d \psi_{i_k}(uT_k)^* y}_{{\rm Rad}({\mathbb N}^d;X^*)}.
$$
The $R$-boundedness of the set $\F_2$ and the estimate (\ref{4F2}) imply that
$$
A(x)\lesssim \norm{\varphi}_{\infty, E_s^d}\, A'(x),
$$
with
$$
A'(x) = \biggnorm{\sum_{i_1,\ldots,i_d=1}^{n}\varepsilon_{i_1,\ldots,i_d}
\otimes 
\prod_{k=1}^d \phi_{i_k}(uT_k)x}_{{\rm Rad}({\mathbb N}^d;X)}.
$$
Equivalently,
$$
A'(x)^2 =\int_{{\mathcal X}_0}
\biggnorm{\sum_{i_1,\ldots,i_d=1}^{n}\varepsilon_{i_1,\ldots,i_d}(\lambda)
\prod_{k=1}^d \phi_{i_k}(uT_k)x}_X^2\, d\Pdb_0(\lambda).
$$
For any $\lambda\in \X_0$, we may write
$$
\sum_{i_1,\ldots,i_d=1}^{n}\varepsilon_{i_1,\ldots,i_d}(\lambda)
\prod_{k=1}^d \phi_{i_k}(uT_k)x = \Phi_\lambda(uT_1,\ldots,uT_k)x,
$$
where $\Phi_\lambda\colon \Ddb^d\to\Cdb$ is given by
$$
\Phi_\lambda(z_1,\ldots,z_k) = \sum_{i_1,\ldots,i_d=1}^{n}\varepsilon_{i_1,\ldots,i_d}(\lambda)
\prod_{k=1}^d \phi_{i_k}(z_k).
$$
It follows from property (a) in 
the proof of \cite[Theorem 5.1]{LMRMN} that this function is bounded on
$\Ddb^d$, and that 
$$
\sup\bigl\{\norm{\Phi_\lambda}_{\infty,{\mathbb D}^d}\, 
:\,\lambda\in\X_0\bigr\} <\infty.
$$
Therefore, it follows from (\ref{4PB}) that
$A'(x)\lesssim \norm{x}$. Thus, $A(x)\lesssim \norm{\varphi}_{\infty, E_s^d}\norm{x}$.

Note that since $(T_1,\ldots, T_d)$ is polynomially bounded,
the $d$-tuple $(T_1^*,\ldots, T_d^*)$ is also polynomially bounded. 
Moreover $\psi_{i_k}(uT_k)^* =\psi_{i_k}(uT_k^*)$ for all $k,i_k,u$.
Hence, the argument used to estimate $A'(x)$ also shows that 
$B(y)\lesssim \norm{y}$. We finally obtain an estimate 
$$
\bigl\vert \langle S(n,u)x,y\rangle\bigr\vert\lesssim 
\norm{\varphi}_{\infty, E_s^d}\norm{x}\norm{y}.
$$
Letting $n\to\infty$, we deduce 
the expected estimate (\ref{4Goal}).

\begin{remark}\label{4Converse}
Combined with Lemma \ref{2Runge}, the above proof 
actually shows that if $(T_1,\ldots,T_d)$ is a
commuting $d$-tuple of $R$-Ritt$_E$ operators on any Banach space $X$, then 
$(T_1,\ldots,T_d)$ admits a bounded $H^\infty$-functional calculus 
if and only if $(T_1,\ldots,T_d)$ is polynomially bounded. 
\end{remark}

\section{The case when $X$ has property $(\alpha)$}\label{5M2} 
The main objective of this section
is to prove Theorem \ref{1Main2}. 
This will be achieved after  establishing two intermediate
propositions of independent interest.

Let $(T_1,\ldots,T_d)$ be a commuting $d$-tuple of 
Ritt$_E$ operators on some Banach space $X$. It follows from 
Lemma \ref{2Runge} that 
if $(T_1,\ldots,T_d)$ admits a bounded $H^\infty$-functional calculus, 
then each $T_k$  admits a bounded $H^\infty$-functional calculus.
It is well known that the converse is already false for Ritt operators.
In Remark \ref{5CEX} below, we provide an example for the sake of completeness.
However, we have the following remarkable result.

\begin{proposition}\label{5Joint}
Assume that $X$ is a Banach lattice or 
$X$ has property $(\alpha)$. If  each $T_k$  
admits a bounded $H^\infty$-functional calculus, 
then the $d$-tuple $(T_1,\ldots,T_d)$ admits a bounded 
$H^\infty$-functional calculus.
\end{proposition}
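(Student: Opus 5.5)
We argue by induction on $d$, the case $d=1$ being the hypothesis itself. By Lemma \ref{2Runge}, it suffices to find $s_1,\ldots,s_d\in(0,1)$ and a constant $K$ such that
\[
\norm{\varphi(T_1,\ldots,T_d)}\leq K\norm{\varphi}_{\infty,E_{s_1}\times\cdots\times E_{s_d}},\qquad \varphi\in\P_d.
\]
The mechanism is the same as in the sectorial case (Lancien--Lancien--Le Merdy, Kalton--Weis). One treats $\varphi$ as a function of $z_d$ with values in functions of $(z_1,\ldots,z_{d-1})$. Property $(\alpha)$, or the lattice structure, is used to upgrade the bounded functional calculus of the $(d-1)$-tuple to an $R$-bounded one. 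The key intermediate claim is the following: if $(T_1,\ldots,T_{d-1})$ has a bounded $H^\infty(E_{r_1}\times\cdots\times E_{r_{d-1}})$-calculus on a space with property $(\alpha)$, then for $s_k\in(\rho_k,r_k)$ the set
\[
\bigl\{\psi(uT_1,\ldots,uT_{d-1})\,:\,\norm{\psi}_{\infty}\leq 1,\ u\in(0,1)\bigr\}
\]
is $R$-bounded. I would prove this as in the sectorial case, via the square function estimate (\ref{3Quadratic}) together with a Franks--McIntosh decomposition for Ritt$_E$ operators from \cite{B}. Write $\psi=\sum_i \psi\,\theta_i\phi_i\eta_i$, with $\sum_i|\phi_i|\leq C$ and the $\theta_i,\eta_i$ giving square function bounds. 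Then $R$-boundedness of the family $\{\psi(T)\}$ reduces to a double Rademacher sum over $(\ell,i)$. Property $(\alpha)$ allows the scalars $\psi_\ell(\theta_i\ldots)$ to be pulled out, exactly as in \cite[Theorem 12.8]{KaW1} / \cite{LLL}. In the lattice case, one uses instead that a bounded $H^\infty$-calculus on a lattice with finite cotype is automatically $R$-bounded (Kalton--Weis), or works directly with $\ell^2$-valued square functions. Note that a Banach lattice with finite cotype has $(\alpha)$. Cotype is needed for (\ref{3Quadratic}), so in the lattice case some care or an extra assumption is needed here.

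Given this $R$-bounded calculus, I would perform the last variable's Cauchy integral as in the proof of ``(iii) implies (i)'' of Theorem \ref{1Main1}. Fix $\varphi\in\P_d$ and $u\in(0,1)$. Using the decomposition $1=\sum_i\theta_i\phi_i\psi_i$ of \cite[Theorem 5.1]{LMRMN} in the variable $z_d$ only, write
\[
\varphi(uT_1,\ldots,uT_d)=\lim_n\sum_{i=1}^n \Bigl[\frac{1}{2\pi i}\int_{\partial E_{s_d}}\varphi(uT_1,\ldots,uT_{d-1},z_d)\,\theta_i(z_d)R(z_d,uT_d)\,dz_d\Bigr]\phi_i(uT_d)\psi_i(uT_d).
\]
In this formula, $\varphi(uT_1,\ldots,uT_{d-1},z_d)$ ranges over an $R$-bounded set with bound $\lesssim\norm{\varphi}_\infty$, by the claim. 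The factor $\prod_j(\xi_j-z_d)R(z_d,uT_d)$ ranges over an $R$-bounded set, since $T_d$ is $R$-Ritt$_E$ by Lemma \ref{2Delta}; property $(\alpha)$ implies $(\Delta)$. Products of commuting $R$-bounded sets are $R$-bounded, and $\theta_i(z)/\prod_j(\xi_j-z)$ is integrable on $\partial E_{s_d}$ uniformly in $i$ (property (c)). Hence, by \cite[Proposition 8.5.2]{HVVW2}, the bracketed operators form an $R$-bounded family with constant $\lesssim\norm{\varphi}_\infty$.

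We then pair with $y\in X^*$ and apply (\ref{2CS}) exactly as before. The $x$-side is bounded by $\sup_\lambda\norm{\sum_i\varepsilon_i(\lambda)\phi_i(uT_d)}\lesssim\norm{x}$, using the single-operator bounded calculus of $T_d$ and property (a). The $y$-side is bounded by $\norm{y}$, using the calculus of $T_d^*$ and property (d). This yields the estimate uniformly in $u$, and Lemma \ref{2Runge} concludes.

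The main obstacle is the intermediate claim, namely the $R$-boundedness of the $H^\infty$-calculus of the $(d-1)$-tuple, uniformly in $u$. This requires carrying the Franks--McIntosh machinery of \cite{B,ArLM1} over to several variables, which is needed for the induction. I would first try to avoid it by a direct $d$-fold version of the above argument. Apply the decomposition in all variables simultaneously, so that only the $R$-boundedness of products of one-variable resolvent sets is needed, with property $(\alpha)$ handling the $d$-fold Rademacher sums. Then bound the $x$-side term $\norm{\sum_{i_1,\ldots,i_d}\varepsilon_{i_1\cdots i_d}\prod_k\phi_{i_k}(uT_k)x}$ by iterating the one-variable calculi. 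Property $(\alpha)$ identifies $\mathrm{Rad}(\Ndb^d;X)$ with $\mathrm{Rad}(\cdots\mathrm{Rad}(X))$, and the one-variable square function estimates, each extended to $\mathrm{Rad}(X)$, can then be applied one coordinate at a time.
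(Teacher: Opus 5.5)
\textbf{Gap: the Banach lattice case is not proved.} As you partly acknowledge, your argument does not cover the Banach lattice half of the statement, and the missing piece is a missing idea, not extra care. An arbitrary Banach lattice, e.g.\ $C[0,1]$ or $\ell^\infty$, has no finite cotype. It also fails property $(\Delta)$: it contains $K(\ell^2)$ isometrically, and for matrix units $e_{kj}$ the ${\rm Rad}^2$-norm of $\sum_{k,j}\varepsilon_k\otimes\varepsilon_j\otimes t_{kj}e_{kj}$ is exactly $\norm{t}_{B(\ell^2_n)}$, so the triangular projection becomes triangular truncation, whose norm grows like $\log n$.

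Consequently, Lemma \ref{2Delta} does not make the $T_k$ $R$-Ritt$_E$, and (\ref{3Quadratic}) is unavailable. Every $R$-boundedness statement you feed into Rademacher sums (the resolvents of $T_d$, the bracketed operators, the calculus of the $(d-1)$-tuple) then has no basis. The root cause is that the decomposition $1=\sum_i\theta_i\phi_i\psi_i$ of \cite{LMRMN} carries the $\varphi$-dependence in operators $\varphi(uT)\theta_i(uT_d)$, which can only be controlled through $R$-bounded resolvents.

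The paper's route (adapting \cite[Theorem 3.1]{ArLM1} via \cite[Proposition 4.6, Lemma 4.7]{B}) uses a Franks--McIntosh decomposition instead. There $\varphi$ enters only through \emph{scalar} coefficients with $|a_i(\varphi)|\lesssim\norm{\varphi}_\infty$, so that $\langle\varphi(uT)x,y\rangle=\sum_{i\in\Ndb^d}a_i(\varphi)\langle\Phi_ix,\Psi_i^*y\rangle$, where $\Phi_i=\prod_k\phi_{i_k}(uT_k)$ and $\Psi_i=\prod_k\psi_{i_k}(uT_k)$. In a lattice one then argues as follows.
\begin{itemize}
\item Use $\sum_i|\langle\Phi_ix,\Psi_i^*y\rangle|\le\norm{(\sum_i|\Phi_ix|^2)^{1/2}}\,\norm{(\sum_i|\Psi_i^*y|^2)^{1/2}}$.
\item Bound one-variable square functions by $\norm{(\sum|z_i|^2)^{1/2}}\lesssim\mathbb{E}\norm{\sum_i\varepsilon_iz_i}$, which holds in every lattice, together with the individual calculi.
\item Iterate over the $d$ variables using Krivine's theorem $\norm{(\sum|Sz_i|^2)^{1/2}}\le K_G\norm{S}\norm{(\sum|z_i|^2)^{1/2}}$.
\end{itemize}
None of this needs cotype, $(\Delta)$ or $R$-Ritt$_E$.

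\textbf{The $(\alpha)$ case.} Here your approach works, but it differs from the paper's: you use resolvents plus $R$-Ritt$_E$, as in the proof of ``(iii) implies (i)'', where the paper uses scalar coefficients plus $(\alpha)$. Moreover, the induction and the $R$-bounded calculus claim you call the main obstacle are unnecessary, because your fallback closes directly:
\begin{itemize}
\item The set $\F_2$ is $R$-bounded with $\R(\F_2)\lesssim\norm{\varphi}_\infty$ using only that each $T_k$ is $R$-Ritt$_E$.
\item Pair through the orthonormal products $\varepsilon^{(1)}_{i_1}\cdots\varepsilon^{(d)}_{i_d}$. By $(\alpha)$, their $X$-valued span is equivalent to ${\rm Rad}(\Ndb^d;X)$, so $\F_2$ can be applied.
\item After that, $\prod_k\bigl(\sum_{i_k}\varepsilon^{(k)}_{i_k}(\omega)\phi_{i_k}(uT_k)\bigr)x$ is bounded pointwise in $\omega$ by the individual calculi, so no square function estimates are needed.
\item Bound the $X^*$ side pointwise in the same way. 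Do not pass to ${\rm Rad}(\Ndb^d;X^*)$, since $X^*$ need not have $(\alpha)$ (e.g.\ $X=L^1$).
\end{itemize}
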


\begin{proof}
This result is known for Ritt operators,
see \cite[Theorem 3.1]{ArLM1}. It is easy
to adapt the proof to Ritt$_E$ operators without much effort. 
Indeed, the proof
of \cite[Theorem 3.1]{ArLM1} relies on 
the Franks-McIntosh type decomposition and the factorization result
established in \cite[Theorem 6.1]{ArLM1} and \cite[Remark 6.3]{ArLM1},
respectively. 
Then this proof also works for 
Ritt$_E$ operators, using \cite[Proposition 4.6]{B}
instead of \cite[Theorem 6.1]{ArLM1} and 
\cite[Lemma 4.7]{B} instead of \cite[Remark 6.3]{ArLM1}.
We skip the details.
\end{proof}

\begin{remark}\label{5CEX}
Let $Y$ be a Banach space and let $X={\rm Rad}^2(Y)$.
We may define $T_1,T_2\in B(X)$ by setting
$$
T_1\Bigl(\sum_{k,j} \varepsilon_k\otimes \varepsilon_j\otimes x_{kj}\Bigr) 
= \sum_{k,j} (1-2^{-k})\varepsilon_k\otimes \varepsilon_j\otimes x_{kj}
$$
and
$$
T_2\Bigl(\sum_{k,j} \varepsilon_k\otimes \varepsilon_j\otimes x_{kj}\Bigr) 
= \sum_{k,j} (1-2^{-j})\varepsilon_k\otimes \varepsilon_j\otimes x_{kj},
$$
for all finitely supported families $(x_{kj})_{k,j\geq 1}$ of $Y$,
and then extending by continuity. Applying 
\cite[Proposition 6.1.13]{HVVW2}, it is easy to
check that $T_1,T_2$ are
Ritt operators with a bounded $H^\infty$-functional calculus. 
These two operators obviously commute. Furthermore, for any $\varphi\in H^\infty(\Ddb^2)$
and $u\in(0,1)$, 
$$
\varphi(uT_1,uT_2)\Bigl(\sum_{k,j} \varepsilon_k\otimes \varepsilon_j\otimes x_{kj}\Bigr) 
= \sum_{k,j} \varepsilon_k\otimes \varepsilon_j\otimes 
\varphi\bigl(u(1-2^{-k}),u(1-2^{-j})\bigr)
x_{kj},
$$
for all  finitely supported families $(x_{kj})_{k,j\geq 1}$ of $Y$.

By interpolation theory (see e.g. \cite[Chapter VII]{Ga}), for any bounded 
family $(z_{kj})_{k,j\geq 1}$ of complex numbers, there exists $\varphi\in H^\infty(\Ddb^2)$
such that $\varphi(1-2^{-k},1-2^{-j})=z_{kj}$ for all $k,j\geq 1$ and moreover we have an estimate
$\norm{\varphi}_{\infty,{\mathbb D}^2}\lesssim \sup\{\vert z_{kj}\vert\, : \, k,j\geq 1\}$.
We deduce that if $(T_1,T_2)$ is polynomially bounded, then
$Y$ has property $(\alpha)$. 

Thus, if $Y$ does not have property $(\alpha)$, we have constructed 
two commuting Ritt operators $T_1,T_2$ on $X$ such that 
$T_1$ and $T_2$ have a bounded $H^\infty$-functional calculus,
but $(T_1,T_2)$ does not have a bounded $H^\infty$-functional calculus.
\end{remark}

For any integer $l\geq 0$, consider the intervals
$$
I_l=(2^{-(l+1)}\pi, 2^{-l}\pi)\subset (0,\pi)
\qquad\hbox{and}\qquad
I'_l=(-2^{-l}\pi, -2^{-(l+1)}\pi)\subset (-\pi,0).
$$
Let $\H$ be the algebra of all holomorphic functions
defined on an open neighborhood of $\overline{\Ddb}$.
For any  $\varphi\in\H$ and for any $l\geq 0$, 
let ${\rm Var}(\varphi_{\vert I_l})$
be the variation of the function $t\mapsto \varphi(e^{it})$ on $I_l$.
We define ${\rm Var}(\varphi_{\vert I'_l})$ similarly.

For any Banach space $Y$, let ${\mathfrak s}_Y\colon 
\ell^2_{\mathbb Z}(Y)\to \ell^2_{\mathbb Z}(Y)$
be the shift operator defined by setting 
$$
{\mathfrak s}_Y\bigl((y_m)_{m\in{\mathbb Z}}\bigr)
= (y_{m-1})_{m\in{\mathbb Z}}, 
\qquad 
(y_m)_{m\in{\mathbb Z}} \in \ell^2_{\mathbb Z}(Y).
$$
For any $\varphi\in\H$, we
define
$$
\norm{\varphi}_Y : = \bignorm{\varphi({\mathfrak s}_Y)\colon \ell^2_{\mathbb Z}(Y)\longrightarrow\ell^2_{\mathbb Z}(Y)}.
$$
Since the operator 
$\varphi({\mathfrak s}_Y)$
is the Fourier multiplier associated with the symbol 
$\varphi_{\vert{\mathbb T}}$, the following Marcinkiewicz type theorem
is a special case of \cite[Theorem 4.3]{Blunck}. 

\begin{lemma}\label{5Marcin}
Assume that $Y$ is a UMD Banach space. There exists a constant $K\geq 1$
such that for all $\varphi\in\H$, we have
$$
\norm{\varphi}_Y
\leq K\bigl(\norm{\varphi}_{\infty,{\mathbb D}} 
+\sup_{l\geq 0}
{\rm Var}(\varphi_{\vert I_l}) + \sup_{l\geq 0}
{\rm Var}(\varphi_{\vert I'_l})\bigr)
$$
\end{lemma}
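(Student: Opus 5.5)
The plan is to reduce Lemma \ref{5Marcin} to the operator-valued (here scalar-valued, acting on $Y$) Marcinkiewicz multiplier theorem for Fourier series on $\Tdb$ with values in a UMD space, which is the content of \cite[Theorem 4.3]{Blunck}. The first step is to identify $\ell^2_{\mathbb Z}(Y)$ with a space on which $\varphi({\mathfrak s}_Y)$ acts as a Fourier multiplier. Via the Fourier transform, sequences $(y_m)_{m\in\mathbb Z}$ correspond to functions $\sum_m y_m e^{imt}$ on $\Tdb$. The shift ${\mathfrak s}_Y$ corresponds to multiplication by $e^{it}$, so $\varphi({\mathfrak s}_Y)$ corresponds to multiplication by $\varphi(e^{it})$. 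Since $\varphi\in\H$ is holomorphic near $\overline{\Ddb}$, its Taylor series converges absolutely on a disc of radius $>1$. Hence $\varphi({\mathfrak s}_Y)=\sum_n a_n {\mathfrak s}_Y^n$ is a norm-convergent series, and this correspondence is rigorous on finitely supported sequences, which are dense.

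The subtlety is that $\ell^2_{\mathbb Z}(Y)$ is not $L^2(\Tdb;Y)$ unless $Y$ is a Hilbert space. So the second step is to read Blunck's theorem in its discrete form. I would use the version that treats multipliers on $\ell^p_{\mathbb Z}(Y)$, equivalently convolution operators on $\mathbb Z$ whose symbol is a function on $\Tdb$. It asserts that, for $Y$ UMD and $1<p<\infty$ (here $p=2$), a bounded symbol $m$ on $\Tdb$ with uniformly bounded variation on the dyadic arcs $I_l$ and $I'_l$ yields a bounded operator on $\ell^p_{\mathbb Z}(Y)$. The bound is $\lesssim \|m\|_\infty+\sup_l{\rm Var}(m_{\vert I_l})+\sup_l{\rm Var}(m_{\vert I'_l})$. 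Here "dyadic" refers to a decomposition accumulating at the point $1\in\Tdb$, i.e.\ $t=0$. Since $\varphi$ is smooth on $\Tdb$, the variations are finite, and the resulting bound is exactly the claimed one, with $K$ depending only on $Y$.

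The one point needing care is the alignment of conventions. The symbol attached to ${\mathfrak s}_Y$ is $e^{it}$ or $e^{-it}$ depending on the sign convention of the Fourier transform. Replacing $\varphi(e^{it})$ by $\varphi(e^{-it})$ swaps the families $I_l$ and $I'_l$, and the right-hand side is symmetric under this swap. Also, $\norm{\varphi}_{\infty,\Ddb}=\sup_{\Tdb}|\varphi|$ by the maximum principle.

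I expect the main obstacle to be this transfer between the discrete setting on $\ell^2_{\mathbb Z}(Y)$ and whatever setting Blunck states. If his theorem is formulated for periodic multipliers on $L^p(\Tdb;Y)$ rather than on sequences, I would pass to the dual picture. The dual of a $\Tdb$-multiplier with symbol on $\mathbb Z$ is not what we need, so instead I would use the standard transference that a function $m$ on $\Tdb$ defines a bounded convolution operator on $\ell^p_{\mathbb Z}(Y)$ iff $m$ (viewed as $2\pi$-periodic) is a bounded Fourier multiplier on $L^p(\mathbb R;Y)$. I would then apply the Marcinkiewicz theorem on $\mathbb R$ for UMD spaces to the periodic symbol. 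Its variation on real dyadic intervals is controlled by $\sup_l{\rm Var}$ on $I_l$ and $I'_l$ near each $2\pi k$. Far from these points the symbol is Lipschitz, contributing a term bounded by $\norm{\varphi}_\infty$ plus the total variation on $[\pi/2,\pi]$, which is itself bounded by the variation on $I_0$.
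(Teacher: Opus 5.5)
Your main line is the paper's argument. The paper only observes that $\varphi({\mathfrak s}_Y)$ is the Fourier multiplier on $\ell^2_{\mathbb Z}(Y)$ with symbol $\varphi_{\vert\Tdb}$, and that the lemma is then a special case of Blunck's UMD-valued discrete Marcinkiewicz theorem \cite[Theorem 4.3]{Blunck}, which is already stated on $\ell^p(\mathbb Z;Y)$, so no transfer step is needed.

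Your fallback through $L^p(\mathbb R;Y)$ would not work as described. A nonconstant $2\pi$-periodic symbol has unbounded variation on the real dyadic intervals $[2^j,2^{j+1}]$ as $j\to\infty$. Also, a dyadic interval containing $2\pi k\neq 0$ sees the full sum $\sum_l {\rm Var}(\varphi_{\vert I_l})$ rather than its supremum. Since the fallback is never needed, this does not affect your proof.
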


In the sequel, we use $H^\infty$-functional calculus for sectorial 
operators, for which we refer to \cite{Ha} or \cite[Chapter 10]{HVVW2}. 
For any
$\omega\in(0,\pi)$, we denote $\Sigma_\omega$ as the set
of all non-zero complex numbers $z$ such 
that $\vert{\rm Arg}(z)\vert<\omega$.

We will use the well known fact that if $T\in B(X)$ is 
a power bounded operator, then $I_X-T$ is sectorial 
(see e.g. \cite[Corollary 2.29]{LM-Book}).

\begin{proposition}\label{5Arhancet}
Let $X,Y$ be Banach spaces and assume that
$Y$ is UMD. Let $T\in B(X)$ be a power
bounded operator and assume that there exists
$K\geq 0$ such that 
$$
\norm{\varphi(T)}\leq K\norm{\varphi}_Y,\qquad \varphi\in\H.
$$
Then for any $\omega\in\bigl(\frac{\pi}{2},\pi\bigr)$, 
$I_X-T$ admits a bounded $H^\infty(\Sigma_\omega)$-functional calculus.
\end{proposition}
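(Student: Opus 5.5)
The natural route is to transfer the Marcinkiewicz-type bound of Lemma \ref{5Marcin} to the sectorial functional calculus of $A:=I_X-T$. We only need to estimate functions of $A$ of the form $f(A)$, where $f$ belongs to $H^\infty_0(\Sigma_\omega)$ or to a dense class such as rational functions decaying at $0$ and $\infty$. The idea is to set $\varphi(z):=f(1-z)$, which is holomorphic in a neighbourhood of $\overline{\Ddb}$ except possibly at $z=1$. We then show that $\norm{\varphi(T)}\lesssim\norm{f}_{\infty,\Sigma_\omega}$ by bounding the right-hand side of Lemma \ref{5Marcin}, using the hypothesis $\norm{\varphi(T)}\leq K\norm{\varphi}_Y$.

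\textbf{Step 1: reduction to functions holomorphic near $\overline{\Ddb}$.} Take $f\in H^\infty_0(\Sigma_\omega)$ and $u\in(0,1)$. The point $1-z$ lies in $\Sigma_{\pi/2}\subset\Sigma_\omega$ for $z\in\overline{\Ddb}$, $z\neq1$. Replacing $T$ by $uT$, equivalently $f$ by $f_u(\lambda)=f(1-u+u\lambda)$, makes $\varphi_u(z)=f(1-uz)$ holomorphic on a neighbourhood of $\overline{\Ddb}$. Since $\varphi_u(T)=\varphi_u(T)$ is computed by the Dunford–Riesz calculus, the hypothesis applies to it. One then checks that $\varphi_u(T)=f(I-uT)$ and that $\sup_u\norm{f(I-uT)}$ controls $\norm{f(A)}$ as $u\to1$, by the usual approximation for sectorial operators (for instance via the convergence lemma of \cite[Chapter 10]{HVVW2}, using that $T$ is power bounded so that $I-uT\to I-T$ uniformly sectorially). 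Note also that $\norm{\varphi_u}_{\infty,\Ddb}\le\norm{f}_{\infty,\Sigma_\omega}$.

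\textbf{Step 2: variation estimate (main point).} We must bound $\sup_l{\rm Var}(\varphi_{u\vert I_l})$ by $C\norm{f}_{\infty,\Sigma_\omega}$, uniformly in $u$, and similarly on $I'_l$. Write
$$
{\rm Var}(\varphi_{u\vert I_l})=\int_{I_l}\bigl\vert u\,e^{it}f'(1-ue^{it})\bigr\vert\,dt .
$$
For $t\in I_l$, the point $\lambda=1-ue^{it}$ lies in a fixed sector $\Sigma_{\nu}$ with $\nu<\omega$; indeed $\nu$ can be taken just above $\pi/2$, uniformly in $u$. By the Cauchy estimate on sectors, $\vert f'(\lambda)\vert\le C_{\nu,\omega}\norm{f}_\infty/\vert\lambda\vert$. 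Moreover, for $t\in I_l$ we have $\vert 1-ue^{it}\vert\gtrsim\vert t\vert\approx2^{-l}$, because $\vert1-ue^{it}\vert\geq\vert\sin t\vert$ for $t$ small and is bounded below otherwise. Hence the integral is at most $C\norm{f}_\infty\,2^{-l}\cdot2^{l}=C\norm{f}_\infty$.

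The main obstacle is precisely this geometric control: showing that $1-ue^{it}$ stays inside a proper subsector of $\Sigma_\omega$ and at distance $\gtrsim\vert t\vert$ from $0$, uniformly in $u$. The bound $\vert{\rm Arg}(1-ue^{it})\vert<\pi/2$ holds for all $u<1$, and $\omega>\pi/2$ leaves room for the Cauchy estimate. This is exactly where the hypothesis $\omega>\frac{\pi}{2}$ is needed.

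\textbf{Step 3: conclusion.} Combining the hypothesis with Lemma \ref{5Marcin} gives
$$
\norm{f(I-uT)}\le K\norm{\varphi_u}_Y\lesssim\norm{f}_{\infty,\Sigma_\omega},
$$
uniformly in $u$. Letting $u\to1$ yields $\norm{f(A)}\lesssim\norm{f}_{\infty,\Sigma_\omega}$ for all $f\in H^\infty_0(\Sigma_\omega)$. This is the bounded $H^\infty(\Sigma_\omega)$-functional calculus of $I_X-T$. If $A$ is not injective, the usual definition via $H^\infty_0$ still applies; alternatively, one can use the ergodic decomposition $X={\rm Ker}(A)\oplus\overline{{\rm Ran}}(A)$, which is available when $X$ is reflexive, or otherwise just the definition through $H^\infty_0$ together with a uniformly bounded approximate identity.
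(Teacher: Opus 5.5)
Your proposal is correct and follows the paper's core argument. You substitute $\varphi(z)=f(1-z)$, apply the hypothesis, and bound $\norm{\varphi}_Y$ via Lemma \ref{5Marcin} using $\vert f'(\lambda)\vert\lesssim\norm{f}_{\infty,\Sigma_\omega}/\vert\lambda\vert$ on $\Sigma_{\pi/2}$ and $\vert 1-ue^{it}\vert\geq\vert\sin t\vert$; this is the calculation the paper imports from \cite{ALM}.

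The only difference is the class of test functions. The paper takes rational $f$ with poles outside $\overline{\Sigma_\omega}$, so that $\varphi\in\mathcal{H}$ with no regularization, and then invokes \cite[Proposition 2.10]{LM1} to pass to the full $H^\infty(\Sigma_\omega)$-calculus. You work directly with $f\in H^\infty_0(\Sigma_\omega)$ and regularize via $uT$. This avoids that extension result, at the cost of the routine uniform sectoriality of $I_X-uT$ and the convergence $f(I_X-uT)\to f(I_X-T)$.
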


\begin{proof}
Set $A=I_X-T$.
We fix $\omega\in\bigl(\frac{\pi}{2},\pi\bigr)$ and we let
$\R_\omega\subset H^\infty(\Sigma_\omega)$ be the algebra 
of all rational functions with a non positive degree and
poles outside $\overline{\Sigma_\omega}$. 
Let $f\in \R_{\omega}$. We 
may define a function $\varphi\in \H$ by 
$\varphi(z) = f(1-z)$ and we have
$f(A)=\varphi(T)$. Therefore, the assumption yields an estimate 
$\norm{f(A)}\leq K\norm{\varphi}_Y$.

Since $Y$ is UMD, the calculation in the proof of 
\cite[Proposition 4.7]{ALM} and Lemma \ref{5Marcin} lead to 
an estimate
$\norm{\varphi}_Y\lesssim \norm{f}_{\infty,\Sigma_\omega}$.
Thus, we have an estimate
$$
\norm{f(A)}\lesssim \norm{f}_{\infty,\Sigma_\omega},\qquad f\in\R_\omega.
$$
By \cite[Proposition 2.10]{LM1}
(see also \cite[Sub-section 5.3.4]{Ha}), 
this implies that $A$ admits a bounded 
$H^\infty(\Sigma_\omega)$-functional calculus.
\end{proof}

\begin{proof}[Proof of Theorem \ref{1Main2}]
The implication ``$(i)\Rightarrow(ii)$" is given by Proposition \ref{5Joint}. 
The implication ``$(ii)\Rightarrow(iii)$" follows from  Theorem \ref{1Main1}
and its proof.
Indeed, if $X$ has property $(\alpha)$, then the Bochner space
$L^2(\X_1;X)$ has property $(\alpha)$ as well, by \cite[Proposition 7.5.3]{HVVW2}. 
Therefore,
$G(I;X)$ has property $(\alpha)$ for any countable set $I$. 
The implication ``$(iii)\Rightarrow(iv)$" is trivial. 

To prove ``$(iv)\Rightarrow(i)$", let $k\in\{1,\ldots,d\}$ and
set $T=T_k$ for convenience. 
By assumption, there
exists a UMD Banach space $Y$, an isometric isomorphism
$U\in B(Y)$ and two bounded maps $J\colon X\to Y$ and 
$Q\colon Y\to X$ such that $T^n=QU^nJ$ for all $n\geq 0$.
This implies $\varphi(T)=Q\varphi(U)J$, and hence 
$\norm{\varphi(T)}\lesssim\norm{\varphi(U)}$ for all $\varphi\in \P_1$.
By elementary approximation, this estimate holds as well
for any $\varphi\in\H$.
By a classical transference argument 
(see e.g. \cite[Proposition 8.7]{LM-Book}), the fact that 
$U$ is an isometric isomorphism implies that 
$\norm{\varphi(U)}\leq\norm{\varphi}_Y$ for all $\varphi\in \H$. 
Therefore, we have an estimate
$$
\norm{\varphi(T)}\lesssim\norm{\varphi}_Y,\qquad \varphi\in\H.
$$
For any $j=1,\ldots,N$, we may apply the above reasoning to 
$\overline{\xi_j}T$ instead of $T$. According to Proposition 
\ref{5Arhancet}, the above estimate therefore implies that 
$A_j: = I_X-\overline{\xi_j}T$ admits a bounded 
$H^\infty(\Sigma_\omega)$-functional calculus
for any $\omega\in\bigl(\frac{\pi}{2},\pi\bigr)$.

By assumption, $T=T_k$ is $R$-Ritt. This implies that
$\sigma(A_j)\subset \{z\in\Cdb\, :\, {\rm Re}(z)\geq 0\}$ and 
that the set
$$
\bigl\{zR(z,A_j)\, :\, {\rm Re}(z)<0\bigr\}
$$
is $R$-bounded. Applying \cite[Proposition 5.1]{KaW1}, we deduce 
that $A_j$ admits a bounded 
$H^\infty(\Sigma_{\omega_j})$-functional calculus
for some $\omega_j\in\bigl(0,\frac{\pi}{2}\bigr)$.
By \cite[Theorem 4.3]{BLM}, this implies that 
$T=T_k$ admits a bounded $H^\infty$-functional calculus.
\end{proof}

\bigskip
\noindent
{\bf Acknowledgements.} 
This work 
has been supported by the EIPHI Graduate school (contract ANR-17-EURE-0002).

\end{document}